\newtheorem{theorem}{Theorem}[section]
\theoremstyle{definition}
\newtheorem{definition}[theorem]{Definition}
\theoremstyle{remark}
\numberwithin{equation}{section}
\newcommand{\R}{{\mathbb{R}}}
\newcommand{\C}{{\mathbb{C}}}
\newcommand{\N}{{\mathbb{N}}}
\newcommand{\dt}{{\rm{d}}t}
\newcommand{\ddz}{\frac{\rm d}{{\rm d}z}}
\DeclareMathOperator{\im}{Im}
\begin{document}

\title[Matrix-valued discrete Dirac system]
{\textbf{Spectral properties of matrix-valued discrete Dirac system}}
\author[Y.~Aygar]{Yelda Aygar}
\address{University of Ankara,
         Faculty of Science,
         Department of Math\-e\-mat\-ics,
         06100, Ankara, Turkey}
\email{yaygar@science.ankara.edu.tr}
\author[E.~Bairamov]{Elgiz Bairamov}
\address{University of Ankara,
         Faculty of Science,
         Department of Math\-e\-mat\-ics,
         06100, Ankara, Turkey}
\email{bairamov@science.ankara.edu.tr}
\author[S.~Yardýmcý]{Seyhmus Yardimci}
\address{University of Ankara,
         Faculty of Science,
         Department of Math\-e\-mat\-ics,
         06100, Ankara, Turkey}
\email{yardimci@science.ankara.edu.tr}
\renewcommand{\subjclassname}{%
  \textup{2010} Mathematics Subject Classification}
\subjclass[2010]{39A05, 39A70, 47A05, 47A10, 47A55.}
\keywords{Discrete Dirac system; Jost solution; spectral analysis; eigenvalue.}

\begin{abstract}
In this paper, we find a polynomial-type Jost solution of a self-adjoint
matrix-valued discrete Dirac system. Then we investigate analytical properties
and asymptotic behavior of this Jost solution. Using the Weyl compact
perturbation theorem, we prove that matrix-valued discrete Dirac system has
continuous spectrum filling the segment $[-2,2].$ Finally, we examine the properties of
the eigenvalues of this Dirac system and we prove that it has a finite number of simple real eigenvalues.
\end{abstract}
\maketitle

%\subjclass[2010]{39A05, 39A70, 47A05, 47A10, 47A55, 47A56 }

\section{\textbf{Introduction}}
Consider the boundary value problem (BVP) consisting of the
Sturm--Liouville equation
\begin{equation}\label{E1}
\begin{cases}-y''+q(x)y=\lambda^2y,& 0\leq x<\infty\medskip\\
y(0)=0,& \medskip\\
\end{cases}
\end{equation}
and the boundary condition%
where $q$ is a real-valued function and $\lambda$ is a spectral parameter. The bounded solution of \eqref{E1} satisfying the condition%
$$ \lim_{x\to\infty}y(x,\lambda)e^{-i\lambda x}=1,\quad
   \lambda\in\overline{\C}_{+}
   :=\left\{\lambda\in\C:\; \im\lambda\geq 0\right\}$$
will be denoted by $e(.,\lambda).$ The solution $e(.,\lambda)$ is called the Jost solution of \eqref{E1}. In \cite{Marchenko}, the author presented a condition depending on the function $q$ that guaranteed $e(.,\lambda)$ has an integral representation as
\begin{equation*}
   e(x,\lambda)=e^{i\lambda x}+\int_{x}^{\infty}K(x,t)e^{i\lambda t}\dt<\infty,
\end{equation*}
where the function $K$ is defined in terms of $q$. Moreover, the author showed that $e(.,\lambda)$ is analytic with respect to $\lambda$ in $%
%TCIMACRO{\U{2102} }%
%BeginExpansion
\mathbb{C}
%EndExpansion
_{+}:=\left\{  \lambda\in%
%TCIMACRO{\U{2102} }%
%BeginExpansion
\mathbb{C}
%EndExpansion
:\text{ }\operatorname{Im}\lambda>0\right\}$, continuous in $\overline{\C}_{+}$ and satisfies
$$e(x,\lambda)=e^{i\lambda x}[1+o(1)],\quad \lambda\in\overline{\C}_{+},\quad x\to\infty.$$ The function $e(\lambda):=e(0,\lambda)$ is
called Jost function of the BVP \eqref{E1}. The functions
$e(.,\lambda)$ and $e(\lambda)$ play an
important role in the solutions of direct and inverse problems of the quantum scattering
theory \cite{Marchenko,Chadan+Sabatier,Levitan,Novikov+Manakov}. The Jost solutions are especially useful in
the study of the spectral analysis of differential and difference operators \cite{Adývar+Bairamov,Bairamov+Celebi,Bairamov+Cakar+Krall,Bairamov*+Cakar*+Krall*,Bairamov+Karaman,Bairamov+Coskun,Krall+Bairamov+Cakar}. Therefore Jost solutions of Dirac systems, Schr\"{o}dinger and discrete Sturm--Liouville equations have been obtained in \cite{Gasymov+Levitan,Jaulent+Jean,Guseinov}. Discrete boundary value problems have been intensively studied in the last decade. The modeling of certain linear and nonlinear problems from economics, optimal control theory and other areas of study has led to the rapid development of the theory of difference equations. Also the spectral analysis of the difference equations has been treated by various authors in connection with the classical moment problem \cite{Akhiezer,Agarwal,Agarwal+Wong,Kelley+Peterson}. The spectral theory of the difference equations have also been applied to the solution of classes of nonlinear discrete Korteveg-de Vries equations and Toda lattices \cite{Toda}. Let us introduce the Hilbert space $\ell_{2}(\N,\C^{2m})$ consisting of all vector sequences $y=\{y_{n}\},$ $y_{n}=\binom{y_{n}^{\left(  1\right)  }}{y_{n}^{\left(  2\right)  }},$
where $y_{n}^{\left(  i\right)  }\in%
%TCIMACRO{\U{2102} }%
%BeginExpansion
\mathbb{C}
%EndExpansion
^{m},$ $i=1,2,$ $n\in%
%TCIMACRO{\U{2115} }%
%BeginExpansion
\mathbb{N}
%EndExpansion
$ and $%
%TCIMACRO{\U{2102} }%
%BeginExpansion
\mathbb{C}
%EndExpansion
^{m}$ is $m-$dimensional $\left(  m<\infty\right)  $ Euclidean \ space. In $\ell_{2}(\N,\C^{2m})$, the norm and inner product are defined by
\[
\left\Vert y\right\Vert _{\ell_{2}}^{2}:=%
%TCIMACRO{\dsum \limits_{n=1}^{\infty}}%
%BeginExpansion
{\displaystyle\sum\limits_{n=1}^{\infty}}
%EndExpansion
\left(  \left\Vert y_{n}^{\left(  1\right)  }\right\Vert _{%
%TCIMACRO{\U{2102} }%
%BeginExpansion
\mathbb{C}
%EndExpansion
^{m}}^{2}+\left\Vert y_{n}^{\left(  2\right)  }\right\Vert _{%
%TCIMACRO{\U{2102} }%
%BeginExpansion
\mathbb{C}
%EndExpansion
^{m}}^{2}\right)  <\infty,
\]%
\[
\left\langle y,z\right\rangle _{\ell_{2}}:=%
%TCIMACRO{\dsum \limits_{n=1}^{\infty}}%
%BeginExpansion
{\displaystyle\sum\limits_{n=1}^{\infty}}
%EndExpansion
\left[  \left(  y_{n}^{\left(  1\right)  },z_{n}^{\left(  1\right)  }\right)
_{%
%TCIMACRO{\U{2102} }%
%BeginExpansion
\mathbb{C}
%EndExpansion
^{m}}+\left(  y_{n}^{\left(  2\right)  },z_{n}^{\left(  2\right)  }\right)
_{_{%
%TCIMACRO{\U{2102} }%
%BeginExpansion
\mathbb{C}
%EndExpansion
^{m}}}\right]  ,
\]
where $\left\Vert .\right\Vert _{%
%TCIMACRO{\U{2102} }%
%BeginExpansion
\mathbb{C}
%EndExpansion
^{m}}$ and $\left(  .,.\right)  _{%
%TCIMACRO{\U{2102} }%
%BeginExpansion
\mathbb{C}
%EndExpansion
^{m}}$ denote the norm and inner product in $%
%TCIMACRO{\U{2102} }%
%BeginExpansion
\mathbb{C}
%EndExpansion
^{m},$ respectively. Now consider the matrix-valued discrete Dirac
system%
\begin{equation}\label{E2}
\begin{cases}
A_{n}y_{n+1}^{(2)}+B_{n}y_{n}^{(2}+P_{n}
y_{n}^{(1)}=\lambda y_{n}^{(1)}\\
A_{n-1}y_{n-1}^{(1)}+B_{n}y_{n}^{(1)}+Q_{n}
y_{n}^{(2)}=\lambda y_{n}^{(2)},
\end{cases}
\N=\left\{1,2,\ldots\right\},
\end{equation}
with the boundary condition
\begin{equation}\label{E3}
y_{0}^{(1)}=0,
\end{equation}
where $A_{n}$, $n\in N
%EndExpansion
\cup\left\{  0\right\} $ and $B_{n}$, $Q_{n},$ $P_{n},$ $n\in%
%TCIMACRO{\U{2115} }%
%BeginExpansion
\mathbb{N}
%EndExpansion
$ are linear operators (matrices) acting in $%
%TCIMACRO{\U{2102} }%
%BeginExpansion
\mathbb{C}
%EndExpansion
^{m}$. Throughout the paper, we will assume that $\det A_{n}\neq0$,
$A_{n}=A_{n}^{\ast}$ $(n\in%
%TCIMACRO{\U{2115} }%
%BeginExpansion
\mathbb{N}
%EndExpansion
\cup\left\{ 0\}  \right)  $, $\det B_{n}\neq0$, $B_{n}=B_{n}^{\ast}$,
$Q_{n}=Q_{n}^{\ast}$ and $P_{n}=P_{n}^{\ast}$ $\left(  n\in%
%TCIMACRO{\U{2115} }%
%BeginExpansion
\mathbb{N}
%EndExpansion
\right)  $, where $\ast$ denotes the adjoint operator. Let $L$ denote the
operator generated in $\ell_{2}\left(
%TCIMACRO{\U{2115} }%
%BeginExpansion
\mathbb{N}
%EndExpansion
,%
%TCIMACRO{\U{2102} }%
%BeginExpansion
\mathbb{C}
%EndExpansion
^{2m}\right)  $ by the BVP \eqref{E2}--\eqref{E3}. The
operator $L$ is self-adjoint, $i.e.$, $L=L^{\ast}$. In the following, we will
assume that the matrix sequences $\left\{  A_{n}\right\}  $, $\left\{
B_{n}\right\}  $, $\left\{  P_{n}\right\}  $, and $\left\{  Q_{n}\right\}$ $\left(  n\in%
%TCIMACRO{\U{2115} }%
%BeginExpansion
\mathbb{N}
%EndExpansion
\right)$, satisfy
\begin{equation}\label{E4}
\sum_{n=1}^{\infty}
n\left(\|I-A_{n}\|+\|I+B_{n}\|+\|P_{n}\|+\| Q_{n}\|\right)<\infty,
\end{equation}
where $\|.\| $ and $I$ denote the matrix norm and identity matrix in $\mathbb{C}^{m}$, respectively.
The setup of this paper is as follows: In Section 2, we find a polynomial-type
Jost solution of \eqref{E2}, investigate analytical properties and asymptotic
behavior of this Jost solution. In Section 3, we show that $\sigma_{c}(L)=[-2,2]$, where $\sigma_{c}\left(  L\right)  $ denotes the
continuous spectrum of $L$. Also, we prove that under the condition \eqref{E4}, the operator $L$ has a finite number of simple real eigenvalues.
To the best of our knowledge, this paper is the first one that focuses on matrix-valued discrete Dirac system including a polynomial-type Jost solution.

\section{Jost solution of \eqref{E2}}
Assume $P_{n}\equiv Q_{n}\equiv0$, $B_{n}\equiv-I$ for all $n\in\N$, and $A_{n}\equiv I$ for all $n\in\N
\cup\left\{0\right\}$ in \eqref{E2}. Then we get
\begin{equation}\label{E5}
\left\{
\begin{array}
[c]{c}%
y_{n+1}^{\left(  2\right)  }-y_{n}^{\left(  2\right)  }=\left[  -iz-\left(
iz\right)  ^{-1}\right]  y_{n}^{\left(  1\right)  }\\
\\
y_{n-1}^{\left(  1\right)  }-y_{n}^{\left(  1\right)  }=\left[  -iz-\left(
iz\right)  ^{-1}\right]  y_{n}^{\left(  2\right)  }%
\end{array}
\right.
\end{equation}
for $\lambda=-iz-\left(  iz\right)^{-1}$.
It is clear that
\[
e_{n}\left(  z\right)  =\binom{e_{n}^{\left(  1\right)  }\left(  z\right)
}{e_{n}^{\left(  2\right)  }\left(  z\right)  }=\binom{z}{-i}z^{2n},\text{
\ \ \ \ \ }n\in%
%TCIMACRO{\U{2115} }%
%BeginExpansion
\mathbb{N}
%EndExpansion
,
\]
is a solution of \eqref{E5}. Now, we will find the solution $\binom{F_{n}\left(
z\right)  }{^{G_{n}\left(  z\right)  }}$, $n\in%
%TCIMACRO{\U{2115} }%
%BeginExpansion
\mathbb{N}
%EndExpansion
\cup\left\{  0\right\}  $ of \eqref{E2} for $\lambda=-iz-\left(  iz\right)
^{-1}$, satisfying the condition%
\begin{equation}\label{E6}
\binom{F_{n}\left(  z\right)  }{^{G_{n}\left(  z\right)  }}=\left[  I+o\left(
1\right)  \right]  e_{n}\left(  z\right), \text{ \ \ \ }\left\vert
z\right\vert =1,\text{ }n\rightarrow\infty.
\end{equation}
The solution $\binom{F_{n}\left(  z\right)  }{^{G_{n}\left(  z\right)  }}$,
$n\in \N$ is called the Jost solution of \eqref{E2} for $\lambda=-iz-\left(iz\right)^{-1}$.
\begin{theorem}\label{T1}
Assume \eqref{E4}. Then for $\lambda=-iz-\left(iz\right)^{-1}$ and $\left\vert
z\right\vert =1$, \eqref{E2} has the solution
$\binom{F_{n}\left(  z\right)  }{^{G_{n}\left(  z\right)  }}$, $n\in\N$ having the representation
\begin{equation}\label{E7}
\binom{F_{n}\left(  z\right)  }{^{G_{n}\left(  z\right)  }}=T_{n}\left(  I+%
%TCIMACRO{\dsum \limits_{m=1}^{\infty}}%
%BeginExpansion
{\displaystyle\sum\limits_{m=1}^{\infty}}
%EndExpansion
K_{nm}z^{2m}\right)  \left(
\begin{array}
[c]{c}%
z\\
-i
\end{array}
\right)  z^{2n},\text{ }n\in
\mathbb{N},
\end{equation}
\begin{equation}\label{E8}
F_{0}\left(  z\right)  =T_{0}^{11}z+T_{0}^{11}%
%TCIMACRO{\dsum \limits_{m=1}^{\infty}}%
%BeginExpansion
{\displaystyle\sum\limits_{m=1}^{\infty}}
%EndExpansion
K_{0m}^{11}z^{2m+1}-iT_{0}^{11}%
%TCIMACRO{\dsum \limits_{m=1}^{\infty}}%
%BeginExpansion
{\displaystyle\sum\limits_{m=1}^{\infty}}
%EndExpansion
K_{0m}^{12}z^{2m},
\end{equation}
where $K_{nm}=\left(
\begin{array}
[c]{cc}%
K_{nm}^{11} & K_{nm}^{12}\\
K_{nm}^{21} & K_{nm}^{22}%
\end{array}
\right)$, $T_{n}=\left(
\begin{array}
[c]{cc}%
T_{n}^{11} & T_{n}^{12}\\
T_{n}^{21} & T_{n}^{22}%
\end{array}
\right)$ and these are expressed in terms of $\left\{  A_{n}\right\}$, $\left\{B_{n}\right\}$, $\left\{P_{n}\right\}$ and $\left\{Q_{n}\right\}$.
\begin{proof}
Substituting $\binom{F_{n}\left(z\right)}{^{G_{n}\left(  z\right)}}$
defined by \eqref{E7} into \eqref{E2} and taking $\lambda=-iz-\left(iz\right)^{-1}$,
$|z| =1$, we get
\begin{align*}
T_{n}^{12}  & =0\text{, \ }T_{n}^{22}=\left(
%TCIMACRO{\dprod \limits_{p=n}^{\infty}}%
%BeginExpansion
{\displaystyle\prod\limits_{p=n}^{\infty}}
%EndExpansion
\left(  -1\right)  ^{n-p}A_{p}B_{p}\right)  ^{-1},\\
T_{n}^{11}  & =-B_{n}\left(
%TCIMACRO{\dprod \limits_{p=n}^{\infty}}%
%BeginExpansion
{\displaystyle\prod\limits_{p=n}^{\infty}}
%EndExpansion
\left(  -1\right)  ^{n-p}A_{p}B_{p}\right)  ^{-1},\\
T_{n}^{21}  & =-Q_{n}T_{n}^{22}-A_{n-1}T_{n-1}^{11}\left(
%TCIMACRO{\dsum \limits_{p=n}^{\infty}}%
%BeginExpansion
{\displaystyle\sum\limits_{p=n}^{\infty}}
%EndExpansion
\left(  T_{p}^{11}\right)  ^{-1}B_{p}Q_{p}T_{p}^{22}-%
%TCIMACRO{\dsum \limits_{p=n}^{\infty}}%
%BeginExpansion
{\displaystyle\sum\limits_{p=n}^{\infty}}
%EndExpansion
\left(  T_{p}^{11}\right)  ^{-1}P_{p}T_{p}^{11}\right),
\end{align*}%
\begin{align*}
K_{n1}^{12}  & =%
%TCIMACRO{\dsum \limits_{p=n+1}^{\infty}}%
%BeginExpansion
{\displaystyle\sum\limits_{p=n+1}^{\infty}}
%EndExpansion
\left(  T_{p}^{11}\right)  ^{-1}B_{p}Q_{p}T_{p}^{22}-%
%TCIMACRO{\dsum \limits_{p=n+1}^{\infty}}%
%BeginExpansion
{\displaystyle\sum\limits_{p=n+1}^{\infty}}
%EndExpansion
\left(  T_{p}^{11}\right)  ^{-1}P_{p}T_{p}^{11},\\
K_{n1}^{11}  & =%
%TCIMACRO{\dsum \limits_{p=n+1}^{\infty}}%
%BeginExpansion
{\displaystyle\sum\limits_{p=n+1}^{\infty}}
%EndExpansion
\left[  -I+\left(  T_{p}^{11}\right)  ^{-1}\left(  B_{p}^{2}T_{p}^{11}%
+A_{p}T_{p+1}^{22}+B_{p}Q_{p}T_{p}^{21}+B_{p}T_{p}^{22}+P_{p}T_{p}^{11}%
K_{p1}^{12}\right)  \right]  ,\\
K_{n1}^{22}  & =\left(  T_{n}^{22}\right)  ^{-1}\left[  B_{n}T_{n}^{11}%
+Q_{n}T_{n}^{21}+T_{n}^{22}+A_{n-1}T_{n-1}^{11}K_{n-1,1}^{11}-T_{n}^{21}%
K_{n1}^{12}\right]  ,\\
K_{n1}^{21}  & =%
%TCIMACRO{\dsum \limits_{p=n+1}^{\infty}}%
%BeginExpansion
{\displaystyle\sum\limits_{p=n+1}^{\infty}}
%EndExpansion
\left(  T_{p}^{22}\right)  ^{-1}T_{p}^{21}\left(  K_{p1}^{11}-I\right)  +%
%TCIMACRO{\dsum \limits_{p=n+1}^{\infty} }%
%BeginExpansion
{\displaystyle\sum\limits_{p=n+1}^{\infty}}
%EndExpansion
\left[  \left(  T_{p}^{22}\right)  ^{-1}\left(  B_{p}T_{p}^{11}+Q_{p}%
T_{p}^{21}\right)  K_{p1}^{12}\right] \\
& +%
%TCIMACRO{\dsum \limits_{p=n+1}^{\infty}}%
%BeginExpansion
{\displaystyle\sum\limits_{p=n+1}^{\infty}}
%EndExpansion
\left(  T_{p}^{22}\right)  ^{-1}Q_{p}T_{p}^{22}K_{p1}^{22}+%
%TCIMACRO{\dsum \limits_{p=n+1}^{\infty}}%
%BeginExpansion
{\displaystyle\sum\limits_{p=n+1}^{\infty}}
%EndExpansion
\left(  T_{p}^{22}\right)  ^{-1}A_{p-1}T_{p-1}^{11}K_{p-1,1}^{12}\\
& +%
%TCIMACRO{\dsum \limits_{p=n+1}^{\infty}}%
%BeginExpansion
{\displaystyle\sum\limits_{p=n+1}^{\infty}}
%EndExpansion
\left(  T_{p}^{22}\right)  ^{-1}A_{p-1}^{2}T_{p}^{21}+%
%TCIMACRO{\dsum \limits_{p=n}^{\infty}}%
%BeginExpansion
{\displaystyle\sum\limits_{p=n}^{\infty}}
%EndExpansion
\left(  T_{p+1}^{22}\right)  ^{-1}\left[  A_{p}P_{p}T_{p}^{11}+A_{p}B_{p}%
T_{p}^{21}\right]  K_{p1}^{11},
\end{align*}%
\begin{align*}
K_{n2}^{12}  & =%
%TCIMACRO{\dsum \limits_{p=n+1}^{\infty}}%
%BeginExpansion
{\displaystyle\sum\limits_{p=n+1}^{\infty}}
%EndExpansion
\left[  \left(  T_{p}^{11}\right)  ^{-1}B_{p}Q_{p}\left(  T_{p}^{21}%
K_{p1}^{12}+T_{p}^{22}K_{p1}^{22}\right)  \right]  -%
%TCIMACRO{\dsum \limits_{p=n+1}^{\infty}}%
%BeginExpansion
{\displaystyle\sum\limits_{p=n+1}^{\infty}}
%EndExpansion
K_{p1}^{12}\\
& +%
%TCIMACRO{\dsum \limits_{p=n+1}^{\infty}}%
%BeginExpansion
{\displaystyle\sum\limits_{p=n+1}^{\infty}}
%EndExpansion
\left(  T_{p}^{11}\right)  ^{-1}\left[  B_{p}^{2}T_{p}^{11}K_{p1}^{12}%
-P_{p}T_{p}^{11}K_{p1}^{11}-A_{p}T_{p+1}^{21}-B_{p}T_{p}^{21}\right],\\
K_{n2}^{11}  & =%
%TCIMACRO{\dsum \limits_{p=n+1}^{\infty}}%
%BeginExpansion
{\displaystyle\sum\limits_{p=n+1}^{\infty}}
%EndExpansion
\left(  T_{p}^{11}\right)  ^{-1} \left[  B_{p}^{2}T_{p}^{11}K_{p1}^{11}%
+P_{p}T_{p}^{11}K_{p2}^{12}+B_{p}T_{p}^{22}K_{p1}^{22}+B_{p}T_{p}^{21}%
K_{p1}^{12}\right] \\
& +%
%TCIMACRO{\dsum \limits_{p=n+1}^{\infty}}%
%BeginExpansion
{\displaystyle\sum\limits_{p=n+1}^{\infty}}
%EndExpansion
\left(  T_{p}^{11}\right)  ^{-1}\left[  B_{p}Q_{p}T_{p}^{21}K_{p1}^{11}%
+B_{p}Q_{p}T_{p}^{22}K_{p1}^{21}\right]  -%
%TCIMACRO{\dsum \limits_{p=n+1}^{\infty}}%
%BeginExpansion
{\displaystyle\sum\limits_{p=n+1}^{\infty}}
%EndExpansion
K_{p1}^{11}\\
& +%
%TCIMACRO{\dsum \limits_{p=n+2}^{\infty}}%
%BeginExpansion
{\displaystyle\sum\limits_{p=n+2}^{\infty}}
%EndExpansion
\left(  T_{p-1}^{11}\right)  ^{-1}\left[  A_{p-1}T_{p}^{22}K_{p1}^{22}%
+A_{p-1}T_{p}^{21}K_{p1}^{12}\right],\\
&\\K_{n2}^{22}  & =-%
%TCIMACRO{\dsum \limits_{p=n+1}^{\infty}}%
%BeginExpansion
{\displaystyle\sum\limits_{p=n+1}^{\infty}}
%EndExpansion
\left(  T_{p}^{22}\right)  ^{-1} \left[  B_{p}T_{p}^{11}K_{p1}^{11}-T_{p}%
^{21}K_{p2}^{12}+Q_{p}T_{p}^{21}K_{p1}^{11}+Q_{p}T_{p}^{22}K_{p1}^{21}%
+T_{p}^{21}K_{p1}^{12}\right] \\
& +%
%TCIMACRO{\dsum \limits_{p=n+1}^{\infty}}%
%BeginExpansion
{\displaystyle\sum\limits_{p=n+1}^{\infty}}
%EndExpansion
\left(  T_{p}^{22}\right)  ^{-1}\left[  A_{p-1}P_{p-1}T_{p-1}^{11}%
K_{p-1,2}^{12}+A_{p-1}B_{p-1}T_{p-1}^{21}K_{p-1,2}^{12}\right] \\
& +%
%TCIMACRO{\dsum \limits_{p=n+1}^{\infty}}%
%BeginExpansion
{\displaystyle\sum\limits_{p=n+1}^{\infty}}
%EndExpansion
\left(  T_{p}^{22}\right)  ^{-1}\left[  -A_{p-1}T_{p-1}^{11}K_{p-1,1}%
^{11}\right]  -%
%TCIMACRO{\dsum \limits_{p=n+1}^{\infty}}%
%BeginExpansion
{\displaystyle\sum\limits_{p=n+1}^{\infty}}
%EndExpansion
K_{p1}^{22}\\
&
%TCIMACRO{\dsum \limits_{p=n+1}^{\infty}}%
%BeginExpansion
{\displaystyle\sum\limits_{p=n+1}^{\infty}}
%EndExpansion
\left(  T_{p}^{22}\right)  ^{-1}\left[  A_{p-1}^{2}T_{p}^{22}K_{p1}%
^{12}+A_{p-1}^{2}T_{p}^{21}K_{p,1}^{12}\right]  ,\\
K_{n2}^{21}  & =%
%TCIMACRO{\dsum \limits_{p=n}^{\infty}}%
%BeginExpansion
{\displaystyle\sum\limits_{p=n}^{\infty}}
%EndExpansion
\left(  T_{p+1}^{22}\right)  ^{-1} \left[  A_{p}T_{p}^{11}K_{p2}^{12}%
+A_{p}P_{p}T_{p}^{11}K_{p2}^{11}+A_{p}B_{p}T_{p}^{21}K_{p2}^{11}\right] \\
& +%
%TCIMACRO{\dsum \limits_{p=n+1}^{\infty}}%
%BeginExpansion
{\displaystyle\sum\limits_{p=n+1}^{\infty}}
%EndExpansion
\left(  T_{p}^{22}\right)  ^{-1}\left[  A_{p-1}^{2}T_{p}^{21}K_{p1}%
^{11}+A_{p-1}^{2}T_{p}^{22}K_{p,1}^{21}+B_{p}T_{p}^{11}K_{p2}^{12}\right]  -%
%TCIMACRO{\dsum \limits_{p=n+1}^{\infty}}%
%BeginExpansion
{\displaystyle\sum\limits_{p=n+1}^{\infty}}
%EndExpansion
K_{p1}^{21}\\
& +%
%TCIMACRO{\dsum \limits_{p=n+1}^{\infty}}%
%BeginExpansion
{\displaystyle\sum\limits_{p=n+1}^{\infty}}
%EndExpansion
\left(  T_{p}^{22}\right)  ^{-1}\left[  Q_{p}T_{p}^{21}K_{p2}^{12}+Q_{p}%
T_{p}^{22}K_{p2}^{22}+T_{p}^{21}K_{p2}^{11}-T_{p}^{21}K_{p1}^{11}\right],
\end{align*}%
where $n\in \N$. Furthermore, for $m\geq3$ and $n\in\N$, we obtain that
\begin{align*}
K_{nm}^{12}  & =-%
%TCIMACRO{\dsum \limits_{p=n+1}^{\infty}}%
%BeginExpansion
{\displaystyle\sum\limits_{p=n+1}^{\infty}}
%EndExpansion
\left(  T_{p}^{11}\right)  ^{-1} \left[  P_{p}T_{p}^{11}K_{p,m-1}^{11}%
-B_{p}^{2}T_{p}^{11}K_{p,m-1}^{12}+B_{p}T_{p}^{21}K_{p,m-2}^{11}\right] \\
& +%
%TCIMACRO{\dsum \limits_{p=n+1}^{\infty}}%
%BeginExpansion
{\displaystyle\sum\limits_{p=n+1}^{\infty}}
%EndExpansion
\left(  K_{p,m-2}^{21}-K_{p,m-1}^{12}\right)  +%
%TCIMACRO{\dsum \limits_{p=n+1}^{\infty}}%
%BeginExpansion
{\displaystyle\sum\limits_{p=n+1}^{\infty}}
%EndExpansion
\left(  T_{p}^{11}\right)  ^{-1}\left[  B_{p}Q_{p}^{2}T_{p}^{21}K_{p,m-2}%
^{11}\right] \\
& -%
%TCIMACRO{\dsum \limits_{p=n+1}^{\infty}}%
%BeginExpansion
{\displaystyle\sum\limits_{p=n+1}^{\infty}}
%EndExpansion
\left(  T_{p}^{11}\right)  ^{-1}\left[  A_{p}T_{p+1}^{21}K_{p+1,m-2}%
^{11}+A_{p}T_{p+1}^{22}K_{p+1,m-2}^{21}\right] \\
& +%
%TCIMACRO{\dsum \limits_{p=n+1}^{\infty}}%
%BeginExpansion
{\displaystyle\sum\limits_{p=n+1}^{\infty}}
%EndExpansion
\left(  T_{p}^{11}\right)  ^{-1}\left[  B_{p}Q_{p}A_{p-1}T_{p-1}%
^{11}K_{p-1,m-1}^{11}+B_{p}Q_{p}T_{p}^{11}K_{p,m-2}^{11}\right] \\
& +%
%TCIMACRO{\dsum \limits_{p=n+1}^{\infty}}%
%BeginExpansion
{\displaystyle\sum\limits_{p=n+1}^{\infty}}
%EndExpansion
\left(  T_{p}^{11}\right)  ^{-1}\left[  B_{p}Q_{p}^{2}T_{p}^{22}K_{p,m-2}%
^{21}+B_{p}Q_{p}T_{p}^{21}K_{p,m-2}^{12}+B_{p}Q_{p}T_{p}^{22}K_{p,m-2}%
^{22}\right]  ,
\end{align*}%
\begin{align*}
K_{nm}^{11}  & =+%
%TCIMACRO{\dsum \limits_{p=n+1}^{\infty}}%
%BeginExpansion
{\displaystyle\sum\limits_{p=n+1}^{\infty}}
%EndExpansion
\left(  T_{p}^{11}\right)  ^{-1} \left[  P_{p}T_{p}^{11}K_{pm}^{12}+B_{p}%
^{2}T_{p}^{11}K_{p,m-1}^{11}+B_{p}Q_{p}T_{p}^{21}K_{p,m-1}^{11}\right] \\
& -%
%TCIMACRO{\dsum \limits_{p=n+1}^{\infty}}%
%BeginExpansion
{\displaystyle\sum\limits_{p=n+1}^{\infty}}
%EndExpansion
K_{p,m-1}^{11}+%
%TCIMACRO{\dsum \limits_{p=n+1}^{\infty}}%
%BeginExpansion
{\displaystyle\sum\limits_{p=n+1}^{\infty}}
%EndExpansion
\left(  T_{p}^{11}\right)  ^{-1}\left[  B_{p}Q_{p}T_{p}^{22}K_{p,m-1}%
^{21}\right] \\
& +%
%TCIMACRO{\dsum \limits_{p=n+1}^{\infty}}%
%BeginExpansion
{\displaystyle\sum\limits_{p=n+1}^{\infty}}
%EndExpansion
\left(  T_{p}^{11}\right)  ^{-1}\left[  A_{p}T_{p+1}^{22}K_{p+1,m-1}%
^{22}+A_{p}T_{p+1}^{21}K_{p+1,m-1}^{12}\right] \\
& +%
%TCIMACRO{\dsum \limits_{p=n+1}^{\infty}}%
%BeginExpansion
{\displaystyle\sum\limits_{p=n+1}^{\infty}}
%EndExpansion
\left(  T_{p}^{11}\right)  ^{-1}\left[  B_{p}T_{p}^{21}K_{p,m-1}^{12}%
+B_{p}T_{p}^{22}K_{p,m-1}^{22}\right],
\end{align*}%
\begin{align*}
K_{nm}^{22}  & =K_{n,m-1}^{22}+\left(  T_{n}^{22}\right)  ^{-1}\left[
A_{n-1}T_{n-1}^{11}K_{n-1,m}^{11}-T_{n}^{21}K_{nm}^{21}+B_{n}T_{n}%
^{11}K_{n,m-1}^{11}\right] \\
& +\left(  T_{n}^{22}\right)  ^{-1}\left[  Q_{n}T_{n}^{21}K_{n,m-1}^{11}%
+Q_{n}T_{n}^{22}K_{n,m-1}^{21}+T_{n}^{21}K_{n,m-1}^{12}\right],
\end{align*}%
\begin{align*}
K_{nm}^{21}  & =K_{n,m-1}^{21}+\left(  T_{n}^{22}\right)  ^{-1}\left[
-A_{n-1}T_{n-1}^{11}K_{n-1,m+1}^{12}-B_{n}T_{n}^{11}K_{nm}^{12}\right] \\
& +\left(  T_{n}^{22}\right)  ^{-1}\left[  -Q_{n}T_{n}^{21}K_{nm}^{12}%
+T_{n}^{21}K_{n,m-1}^{11}-Q_{n}T_{n}^{22}K_{nm}^{22}-T_{n}^{21}K_{nm}%
^{11}\right].
\end{align*}
By the condition \eqref{E4}, the infinite products and the series in the definition
of $T_{n}^{ij}$ and $K_{nm}^{ij}$ $\left(i,j=1,2\right)$ are absolutely
convergent. Therefore, $T_{n}^{ij}$ and $K_{nm}^{ij}$ $\left(
i,j=1,2\right) $ can uniquely be defined by $\{A_{n}\}$, $n\in\N
\cup\left\{0\right\}$, $\left\{B_{n}\right\}$, $\left\{
P_{n}\right\}$ and $\left\{  Q_{n}\right\}$, $n\in\N$, $i.e.$, the system \eqref{E2} for $\lambda=-iz-\left(  iz\right)  ^{-1}$ has the
solution $\binom{F_{n}\left(  z\right)  }{^{G_{n}\left(  z\right)  }}$ given
by \eqref{E7}.
\end{proof}
\end{theorem}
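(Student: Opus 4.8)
The plan is to substitute the polynomial ansatz \eqref{E7} into the Dirac system \eqref{E2}, with $\lambda=-iz-(iz)^{-1}$ and $|z|=1$, and then pin down the matrix coefficients $T_{n}$ and $K_{nm}$ by matching coefficients of equal powers of $z$ in each of the two equations of \eqref{E2}. The reason this works cleanly is that $e_{n}(z)=\binom{z}{-i}z^{2n}$ already solves the unperturbed system \eqref{E5} (the case $A_{n}=I$, $B_{n}=-I$, $P_{n}=Q_{n}=0$), so only the ``perturbations'' $I-A_{n}$, $I+B_{n}$, $P_{n}$, $Q_{n}$ contribute nontrivially; this is the structural reason every formula for the blocks of $K_{nm}$ is a series running from $p=n+1$ (or $p=n$) to $\infty$, which builds in the asymptotic normalization \eqref{E6} at $n\to\infty$ rather than a condition at $n=0$.

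First I would read off the top-order coefficients, namely those of $z^{2n}$ and $z^{2n+1}$. Matching the first equation of \eqref{E2} at top order forces $T_{n}^{12}=0$ and gives a first-order backward recurrence for $T_{n}^{22}$ whose solution is the infinite product $\bigl(\prod_{p=n}^{\infty}(-1)^{n-p}A_{p}B_{p}\bigr)^{-1}$; the remaining top-order matchings then produce $T_{n}^{11}$ and $T_{n}^{21}$ in closed form. Convergence of this product to an \emph{invertible} matrix — so that $(T_{n}^{11})^{-1}$ and $(T_{n}^{22})^{-1}$, which appear throughout, make sense — follows at once from \eqref{E4}, since $\sum_{n}\|I-A_{n}\|<\infty$ and $\sum_{n}\|I+B_{n}\|<\infty$.

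Next I would match, for each $m\ge 1$, the coefficients of $z^{2n+2m}$ and $z^{2n+2m+1}$. This yields, for fixed $m$, a linear relation between the blocks of $K_{nm}$, the blocks of $K_{n\pm1,m'}$ for $m'\le m+1$, and the (already determined) $T_{p}^{ij}$. Solving this relation block by block — taking the blocks in an order for which each one is expressed only through quantities already computed, and reorganizing so that the result is an explicit tail sum — produces the displayed formulas for $K_{n1}^{ij}$, $K_{n2}^{ij}$ and, for $m\ge 3$, $K_{nm}^{ij}$. In effect this is an iteration: $K_{n1}^{ij}$ depends only on the $T$'s and the perturbations, $K_{n2}^{ij}$ on the $K_{\cdot1}^{\cdot\cdot}$ and $T$'s, and so on.

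The step I expect to be the real obstacle is proving that all the tail sums converge absolutely and, crucially, that the matrix series $\sum_{m\ge1}K_{nm}z^{2m}$ in \eqref{E7} converges for $|z|=1$; this is what makes \eqref{E7}--\eqref{E8} meaningful and also retroactively licenses the term-by-term coefficient comparison used above. I would do this by induction on $m$, establishing a bound of the form $\|K_{nm}^{ij}\|\le C\,\sigma_{\,n+\lfloor m/2\rfloor}$, where $\sigma_{k}:=\sum_{p=k}^{\infty}\omega_{p}$ is the plain tail of $\omega_{p}:=\|I-A_{p}\|+\|I+B_{p}\|+\|P_{p}\|+\|Q_{p}\|$. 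The inductive step reduces to estimates such as $\sum_{p\ge n+1}\omega_{p}\,\sigma_{p+k}\le\sigma_{1}\,\sigma_{n+1+k}$, and the final summation over $m$ is exactly where the \emph{weight} $n$ in \eqref{E4} is used: $\sum_{m\ge1}\sigma_{n+\lfloor m/2\rfloor}$ is comparable to $\sum_{p\ge n}(p-n)\,\omega_{p}\le\sum_{p}p\,\omega_{p}<\infty$. With these estimates in hand the coefficient comparison is justified, the formulas determine $T_{n}^{ij}$ and $K_{nm}^{ij}$ uniquely in terms of $\{A_{n}\}$, $\{B_{n}\}$, $\{P_{n}\}$, $\{Q_{n}\}$, and \eqref{E7} indeed solves \eqref{E2}; \eqref{E8} is then just the $n=0$ specialization of \eqref{E7} obtained using $T_{0}^{12}=0$.
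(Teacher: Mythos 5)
Your proposal is correct and follows essentially the same route as the paper: substitute the ansatz \eqref{E7} into \eqref{E2} with $\lambda=-iz-(iz)^{-1}$, match powers of $z$ to obtain the explicit formulas for $T_{n}^{ij}$ and the recursive tail sums for $K_{nm}^{ij}$, and invoke \eqref{E4} for absolute convergence. The only organizational difference is that the inductive bound $\|K_{nm}^{ij}\|\leq C\sum_{p=n+\lfloor m/2\rfloor}^{\infty}\left(\|I-A_{p}\|+\|I+B_{p}\|+\|Q_{p}\|+\|P_{p}\|\right)$ that you build into your convergence argument is stated and proved separately in the paper as Theorem~\ref{T2} (estimate \eqref{E9}), whereas the paper's proof of Theorem~\ref{T1} simply asserts absolute convergence from \eqref{E4}; your version is, if anything, more self-contained on this point.
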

\begin{theorem}\label{T2}
If the condition \eqref{E4} holds, then
\begin{equation}\label{E9}
\left\Vert K_{nm}^{ij}\right\Vert \leq C%
%TCIMACRO{\dsum \limits_{p=n+\left\lfloor \frac{m}{2}\right\rfloor }^{\infty}}%
%BeginExpansion
{\displaystyle\sum\limits_{p=n+\left\lfloor \frac{m}{2}\right\rfloor }%
^{\infty}}
%EndExpansion
\left(  \left\Vert I-A_{p}\right\Vert +\left\Vert I+B_{p}\right\Vert
+\left\Vert Q_{p}\right\Vert +\left\Vert P_{p}\right\Vert \right),\quad i,j=1,2,
\end{equation}
where $\left\lfloor \frac{m}{2}\right\rfloor $ is the integer part of
$\frac{m}{2}$ and $C>0$ is a constant.
\end{theorem}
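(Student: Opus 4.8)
Write $\varepsilon_{p}:=\|I-A_{p}\|+\|I+B_{p}\|+\|Q_{p}\|+\|P_{p}\|$ and $E_{n}:=\sum_{p=n}^{\infty}\varepsilon_{p}$, so that $E_{n+\lfloor m/2\rfloor}$ is precisely the sum on the right of \eqref{E9}. Since $\sum_{n=1}^{\infty}E_{n}=\sum_{n=1}^{\infty}n\varepsilon_{n}$, condition \eqref{E4} is equivalent to $\sum_{n}E_{n}<\infty$; hence $\{E_{n}\}$ is nonincreasing, bounded and tends to $0$, and one has the elementary inequality $\sum_{p\ge a}\varepsilon_{p}E_{p+k}\le E_{a}E_{a+k}$ for all $a\ge1$, $k\ge0$. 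The argument is an induction on $m$ built on the explicit recursions for the $K_{nm}^{ij}$ obtained in the proof of Theorem~\ref{T1}. First one records the a priori bounds on the $T_{n}^{ij}$ underlying these recursions: because the products $\prod_{p=n}^{\infty}(-A_{p}B_{p})$ converge (by \eqref{E4}) to invertible matrices tending to $I$, there is $M>0$ with $\|T_{n}^{11}\|,\|T_{n}^{22}\|,\|(T_{n}^{11})^{-1}\|,\|(T_{n}^{22})^{-1}\|\le M$ for all $n$, together with the smallness estimates $\|T_{n}^{21}\|\le ME_{n}$, $\|A_{p}^{2}-I\|\le M\varepsilon_{p}$, $\|B_{p}^{2}-I\|\le M\varepsilon_{p}$, and the ``residual'' estimates $\|(T_{p}^{11})^{-1}B_{p}^{2}T_{p}^{11}-I\|\le M\varepsilon_{p}$, $\|(T_{p}^{11})^{-1}A_{p}T_{p+1}^{22}-I\|\le ME_{p}$, and the like for every coefficient occurring in the recursions that is not manifestly $O(\varepsilon_{p})$ or $O(E_{p})$. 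Each such estimate is obtained by rewriting the residual as a telescoping difference of the defining products and invoking $A_{p}\to I$, $B_{p}\to-I$.

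The induction then reads: for all $n$ and $i,j=1,2$, $\|K_{nm}^{ij}\|\le C\,E_{n+\lfloor m/2\rfloor}$, with base cases $m=1,2$ taken from the explicit formulas and the step $m\ge3$ from the general recursion. At a fixed level $m$ the four blocks are coupled, but they untangle in the order $K^{12}\to K^{11}\to K^{22}\to K^{21}$: $K_{nm}^{12}$ depends only on levels $m-1,m-2$; $K_{nm}^{11}$ on the block $K^{12}$ at level $m$ and on levels $\le m-1$; and $K_{nm}^{22},K_{nm}^{21}$ on $K^{11},K^{12},K^{22}$ at level $m$, on the block $K^{12}$ at level $m+1$ (itself built from levels $\le m$, hence already available), and on lower levels. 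For the two blocks whose recursion has the form $K_{nm}^{22}=K_{n,m-1}^{22}+(\cdots)$ I would use instead the equivalent ``summation from above'' $K_{nm}^{22}=-\sum_{m'>m}(\cdots)_{m'}$, which is legitimate since $\sum_{m'}\|K_{nm'}^{ij}\|<\infty$ by Theorem~\ref{T1}.

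The core estimate at level $m$ proceeds as follows. In each recursion one first pairs every term $\mp\sum_{p\ge n+1}(\pm I+R_{p})K_{p\pm1,m'}^{\cdot}$, where $\|R_{p}\|\le M\varepsilon_{p}$ or $\|R_{p}\|\le ME_{p}$ by the residual estimates, with its companion $\pm\sum_{p\ge n+1}K_{p,m'}^{\cdot}$; the two $\pm I$-parts telescope to the single value $\pm K_{n+1,m'}^{\cdot}$, leaving only residual sums $\sum_{p\ge n+1}R_{p}K_{p\pm1,m'}^{\cdot}$. Into what remains one substitutes the inductive hypothesis. Since $m'\in\{m-1,m-2\}$ we have $\lfloor m'/2\rfloor\ge\lfloor m/2\rfloor-1$, so each isolated term satisfies $\|K_{n+1,m'}^{\cdot}\|\le CE_{n+1+\lfloor m'/2\rfloor}\le CE_{n+\lfloor m/2\rfloor}$, and each residual sum is majorised by $MC\sum_{p\ge n+1}\varepsilon_{p}E_{p+\lfloor m/2\rfloor-1}$ (or the analogue with $E_{p}$ replacing $\varepsilon_{p}$), which by the inequality of the first paragraph and $\sum_{p}\varepsilon_{p},\sum_{p}E_{p}<\infty$ is again of order $E_{n+\lfloor m/2\rfloor}$ --- up to the boundary adjustments discussed below. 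Summing the finitely many contributions and fixing $C$ suitably closes the induction.

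I expect two points to be the real work. The first is routine but long: verifying the full list of a priori residual estimates of the first paragraph. The second is genuinely delicate: the telescoping must be arranged so that the index shift that ultimately survives is exactly $\lfloor m/2\rfloor$ --- the $\pm I$-coefficient terms that lower $m$ by $2$ must simultaneously raise $n$ by $1$, and the few boundary terms that seem to lose one index unit must be shown to carry a compensating factor $\varepsilon_{p}$ or $E_{p}$ --- and that the accumulated constant remains bounded as $m\to\infty$. A convenient way to monitor the latter is to introduce $\kappa_{m}:=\sup_{n,i,j}\|K_{nm}^{ij}\|/E_{n+\lfloor m/2\rfloor}$, extract from the preceding step a recursive inequality $\kappa_{m}\le a(\kappa_{m-1}+\kappa_{m-2})+b$, and then use the precise coefficients supplied by Theorem~\ref{T1}, together if necessary with a preliminary truncation to indices $n\ge n_{0}$ for which $\sum_{p\ge n_{0}}\varepsilon_{p}$ and $\sum_{p\ge n_{0}}E_{p}$ are small, to conclude that $\sup_{m}\kappa_{m}<\infty$, which is exactly \eqref{E9}.
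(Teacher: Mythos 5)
Your plan is essentially the paper's own proof: induction on $m$ through the explicit recursions of Theorem~\ref{T1}, telescoping the identity-coefficient terms so that only residuals carrying a factor $\left\Vert I-A_{p}\right\Vert$, $\left\Vert I+B_{p}\right\Vert$, $\left\Vert P_{p}\right\Vert$ or $\left\Vert Q_{p}\right\Vert$ survive (the paper does exactly this for $K^{12}_{n,k+1}$, using $T_{p+1}^{22}=A_{p}T_{p}^{11}$ and writing $B_{p}=-I+(B_{p}+I)$), and closing with the double-sum estimate $\sum_{p\geq a}\varepsilon_{p}E_{p+k}\leq E_{a}E_{a+k}$ that you state in your first paragraph. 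You are in fact more scrupulous than the paper on both points you single out as ``the real work'': the paper carries out the inductive step only for $K^{12}$ and dismisses the other three blocks with ``similarly,'' and its final constant $G=2Z+Y'$ with $Z=C+TC$ visibly exceeds the constant $C$ of the previous step, so the uniformity in $m$ that your $\kappa_{m}$-recursion (with the preliminary truncation to $n\geq n_{0}$) is designed to secure is simply not addressed there.

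The one step of your plan that would fail as written is the justification of the ``summation from above'' for $K^{22}$ and $K^{21}$. You cannot get $\sum_{m'}\left\Vert K_{nm'}^{ij}\right\Vert<\infty$ from Theorem~\ref{T1}: its proof only establishes absolute convergence of the sums over $p$ defining each individual $K_{nm}^{ij}$, while convergence of the series in $m$ appearing in \eqref{E7} on $\left\vert z\right\vert=1$ is precisely what \eqref{E9} is needed to prove, so invoking it here is circular. You correctly sense that something extra is required, since the forward recursion $K_{nm}^{22}=K_{n,m-1}^{22}+(\cdots)$ loses one unit of index shift at every even $m$ (where $\left\lfloor\frac{m-1}{2}\right\rfloor=\left\lfloor\frac{m}{2}\right\rfloor-1$) and these losses accumulate; but the repair --- e.g.\ deriving $K_{nm}^{22}\to0$ as $m\to\infty$ directly from the recursion together with the already-established decay of the $K^{11}$ and $K^{12}$ blocks at level $m$, which then legitimises rewriting the recursion as a tail sum in $m$ --- has to be produced inside the induction rather than imported from Theorem~\ref{T1}.
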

\begin{proof}
We will use the method of induction to prove the theorem. For $m=1,$ we get that
\begin{align*}
\left\Vert K_{n1}^{12}\right\Vert  & =\left\Vert
%TCIMACRO{\dsum \limits_{p=n+1}^{\infty}}%
%BeginExpansion
{\displaystyle\sum\limits_{p=n+1}^{\infty}}
%EndExpansion
\left(  T_{p}^{11}\right)  ^{-1}\left[  B_{p}Q_{p}T_{p}^{22}-P_{p}T_{p}%
^{11}\right]  \right\Vert \leq A%
%TCIMACRO{\dsum \limits_{p=n+1}^{\infty}}%
%BeginExpansion
{\displaystyle\sum\limits_{p=n+1}^{\infty}}
%EndExpansion
\left\Vert B_{p}Q_{p}T_{p}^{22}-P_{p}T_{p}^{11}\right\Vert \\
& \leq A^{^{\prime}}%
%TCIMACRO{\dsum \limits_{p=n+1}^{\infty}}%
%BeginExpansion
{\displaystyle\sum\limits_{p=n+1}^{\infty}}
%EndExpansion
\left\Vert Q_{p}\right\Vert +%
%TCIMACRO{\dsum \limits_{p=n+1}^{\infty}}%
%BeginExpansion
{\displaystyle\sum\limits_{p=n+1}^{\infty}}
%EndExpansion
\left\Vert P_{p}\right\Vert \leq C%
%TCIMACRO{\dsum \limits_{p=n+1}^{\infty}}%
%BeginExpansion
{\displaystyle\sum\limits_{p=n+1}^{\infty}}
%EndExpansion
\left(  \left\Vert Q_{p}\right\Vert +\left\Vert P_{p}\right\Vert \right) \\
& \leq C%
%TCIMACRO{\dsum \limits_{p=n}^{\infty}}%
%BeginExpansion
{\displaystyle\sum\limits_{p=n}^{\infty}}
%EndExpansion
\left(  \left\Vert I-A_{p}\right\Vert +\left\Vert I+B_{p}\right\Vert
+\left\Vert Q_{p}\right\Vert +\left\Vert P_{p}\right\Vert \right) \\
& =C%
%TCIMACRO{\dsum \limits_{p=n+\left\lfloor \frac{1}{2}\right\rfloor }^{\infty}}%
%BeginExpansion
{\displaystyle\sum\limits_{p=n+\left\lfloor \frac{1}{2}\right\rfloor }%
^{\infty}}
%EndExpansion
\left(  \left\Vert I-A_{p}\right\Vert +\left\Vert I+B_{p}\right\Vert
+\left\Vert Q_{p}\right\Vert +\left\Vert P_{p}\right\Vert \right),
\end{align*}
where $A=\left\Vert \left(  T_{p}^{11}\right)  ^{-1}\right\Vert$,
$A^{^{\prime}}=A\left\Vert B_{p}\right\Vert \left\Vert T_{p}^{22}\right\Vert$, $C=\max\left\{  1,A^{^{\prime}}\right\}$. Similar to this inequality, we can get \eqref{E9} for $K_{n1}^{11}$, $K_{n1}^{22}$ and
$K_{n1}^{21}$. Now, if we suppose that \eqref{E9} is correct for $m=k$, then we can write
\begin{align*}
\left\Vert K_{n,k+1}^{12}\right\Vert  & \leq\left\Vert
%TCIMACRO{\dsum \limits_{p=n+1}^{\infty}}%
%BeginExpansion
{\displaystyle\sum\limits_{p=n+1}^{\infty}}
%EndExpansion
K_{p,k-1}^{21}-%
%TCIMACRO{\dsum \limits_{p=n+1}^{\infty}}%
%BeginExpansion
{\displaystyle\sum\limits_{p=n+1}^{\infty}}
%EndExpansion
K_{p,k}^{12}\right\Vert \\
& +\left\Vert
%TCIMACRO{\dsum \limits_{p=n+1}^{\infty}}%
%BeginExpansion
{\displaystyle\sum\limits_{p=n+1}^{\infty}}
%EndExpansion
\left(  T_{p}^{11}\right)  ^{-1}\left\{  -P_{p}T_{p}^{11}K_{pk}^{11}+B_{p}%
^{2}T_{p}^{11}K_{pk}^{12}-B_{p}T_{p}^{21}K_{p,k-1}^{11}\right\}  \right\Vert
\\
& +\left\Vert
%TCIMACRO{\dsum \limits_{p=n+1}^{\infty}}%
%BeginExpansion
{\displaystyle\sum\limits_{p=n+1}^{\infty}}
%EndExpansion
\left(  T_{p}^{11}\right)  ^{-1}\left\{  -A_{p}T_{p+1}^{21}K_{p+1,k-1}%
^{11}-A_{p}T_{p+1}^{22}K_{p+1,k-1}^{21}\right\}  \right\Vert \\
& +\left\Vert
%TCIMACRO{\dsum \limits_{p=n+1}^{\infty}}%
%BeginExpansion
{\displaystyle\sum\limits_{p=n+1}^{\infty}}
%EndExpansion
\left(  T_{p}^{11}\right)  ^{-1}\left\{  B_{p}Q_{p}A_{p-1}T_{p-1}%
^{11}K_{p-1,k}^{11}+B_{p}Q_{p}B_{p}T_{p}^{11}K_{p,k-1}^{11}\right\}
\right\Vert \\
& +\left\Vert
%TCIMACRO{\dsum \limits_{p=n+1}^{\infty}}%
%BeginExpansion
{\displaystyle\sum\limits_{p=n+1}^{\infty}}
%EndExpansion
\left(  T_{p}^{11}\right)  ^{-1}\left\{  B_{p}Q_{p}^{2}T_{p}^{21}%
K_{p,k-1}^{11}+B_{p}Q_{p}T_{p}^{22}K_{p,k-1}^{21}\right\}  \right\Vert \\
& +\left\Vert
%TCIMACRO{\dsum \limits_{p=n+1}^{\infty}}%
%BeginExpansion
{\displaystyle\sum\limits_{p=n+1}^{\infty}}
%EndExpansion
\left(  T_{p}^{11}\right)  ^{-1}\left\{  B_{p}Q_{p}T_{p}^{21}K_{p,k-1}%
^{12}+B_{p}Q_{p}T_{p}^{22}K_{p,k-1}^{22}\right\}  \right\Vert.
\end{align*}
If we use $T_{p+1}^{22}=A_{p}T_{p}^{11}$ for last inequality, we find
\begin{align*}
\left\Vert K_{n,k+1}^{12}\right\Vert  & \leq%
%TCIMACRO{\dsum \limits_{p=n+1}^{\infty}}%
%BeginExpansion
{\displaystyle\sum\limits_{p=n+1}^{\infty}}
%EndExpansion
\left\Vert K_{p,k-1}^{21}-K_{p+1,k-1}^{21}\right\Vert +%
%TCIMACRO{\dsum \limits_{p=n+1}^{\infty}}%
%BeginExpansion
{\displaystyle\sum\limits_{p=n+1}^{\infty}}
%EndExpansion
\left\Vert \left(  T_{p}^{11}\right)  ^{-1}\left(  I-A_{p}^{2}\right)
T_{p}^{11}K_{p+1,k-1}^{21}\right\Vert \\
& +%
%TCIMACRO{\dsum \limits_{p=n+1}^{\infty}}%
%BeginExpansion
{\displaystyle\sum\limits_{p=n+1}^{\infty}}
%EndExpansion
\left\Vert \left(  T_{p}^{11}\right)  ^{-1}\left(  -B_{p}-I\right)  T_{p}%
^{21}K_{p,k-1}^{11}\right\Vert \\
& +%
%TCIMACRO{\dsum \limits_{p=n+1}^{\infty}}%
%BeginExpansion
{\displaystyle\sum\limits_{p=n+1}^{\infty}}
%EndExpansion
\left\Vert \left(  T_{p}^{11}\right)  ^{-1}\left(  I-A_{p}\right)
T_{p+1}^{21}K_{p+1,k-1}^{21}\right\Vert \\
& +%
%TCIMACRO{\dsum \limits_{p=n+1}^{\infty}}%
%BeginExpansion
{\displaystyle\sum\limits_{p=n+1}^{\infty}}
%EndExpansion
\left\Vert \left(  T_{p}^{11}\right)  ^{-1}\left(  T_{p}^{21}K_{p,k-1}%
^{11}-T_{p+1}^{21}K_{p+1,k-1}^{11}\right)  \right\Vert \\
& +C^{^{\prime\prime}}%
%TCIMACRO{\dsum \limits_{p=n+1}^{\infty}}%
%BeginExpansion
{\displaystyle\sum\limits_{p=n+1}^{\infty}}
%EndExpansion
\left\Vert B_{p}+I\right\Vert
%TCIMACRO{\dsum \limits_{s=p+\left\lfloor \frac{k}{2}\right\rfloor }^{\infty}}%
%BeginExpansion
{\displaystyle\sum\limits_{s=p+\left\lfloor \frac{k}{2}\right\rfloor }%
^{\infty}}
%EndExpansion
\left\Vert N_{s}\right\Vert +%
%TCIMACRO{\dsum \limits_{p=n+1}^{\infty}}%
%BeginExpansion
{\displaystyle\sum\limits_{p=n+1}^{\infty}}
%EndExpansion
\left\Vert P_{p}\right\Vert
%TCIMACRO{\dsum \limits_{s=p+\left\lfloor \frac{k}{2}\right\rfloor }^{\infty}}%
%BeginExpansion
{\displaystyle\sum\limits_{s=p+\left\lfloor \frac{k}{2}\right\rfloor }%
^{\infty}}
%EndExpansion
\left\Vert N_{s}\right\Vert \\
& +B%
%TCIMACRO{\dsum \limits_{p=n+1}^{\infty}}%
%BeginExpansion
{\displaystyle\sum\limits_{p=n+1}^{\infty}}
%EndExpansion
\left\Vert Q_{p}\right\Vert
%TCIMACRO{\dsum \limits_{s=p+\left\lfloor \frac{k-2}{2}\right\rfloor }^{\infty
%}}%
%BeginExpansion
{\displaystyle\sum\limits_{s=p+\left\lfloor \frac{k-2}{2}\right\rfloor
}^{\infty}}
%EndExpansion
\left\Vert N_{s}\right\Vert +D%
%TCIMACRO{\dsum \limits_{p=n+1}^{\infty}}%
%BeginExpansion
{\displaystyle\sum\limits_{p=n+1}^{\infty}}
%EndExpansion
\left\Vert Q_{p}\right\Vert
%TCIMACRO{\dsum \limits_{s=p+\left\lfloor \frac{k-1}{2}\right\rfloor }^{\infty
%}}%
%BeginExpansion
{\displaystyle\sum\limits_{s=p+\left\lfloor \frac{k-1}{2}\right\rfloor
}^{\infty}}
%EndExpansion
\left\Vert N_{s}\right\Vert,
\end{align*}
where $\left\Vert N_{s}\right\Vert =\left\Vert I-A_{s}\right\Vert +\left\Vert
I+B_{s}\right\Vert +\left\Vert Q_{s}\right\Vert +\left\Vert P_{s}\right\Vert$ and $C^{^{\prime\prime}}$, $B$, $D$ are constants. It follows from that
\begin{align*}
\left\Vert K_{n,k+1}^{12}\right\Vert & \leq\left\Vert K_{n+1,k-1}^{21}\right\Vert +D^{^{\prime}}%
%TCIMACRO{\dsum \limits_{p=n+1}^{\infty}}%
%BeginExpansion
{\displaystyle\sum\limits_{p=n+1}^{\infty}}
%EndExpansion
\left\Vert I-A_{p}\right\Vert
%TCIMACRO{\dsum \limits_{s=p+1+\left\lfloor \frac{k}{2}\right\rfloor }^{\infty
%}}%
%BeginExpansion
{\displaystyle\sum\limits_{s=p+1+\left\lfloor \frac{k}{2}\right\rfloor
}^{\infty}}
%EndExpansion
\left\Vert N_{s}\right\Vert \\
& +D^{^{\prime\prime}}%
%TCIMACRO{\dsum \limits_{p=n+1}^{\infty}}%
%BeginExpansion
{\displaystyle\sum\limits_{p=n+1}^{\infty}}
%EndExpansion
\left\Vert B_{p}+I\right\Vert
%TCIMACRO{\dsum \limits_{s=p+\left\lfloor \frac{k-1}{2}\right\rfloor }^{\infty
%}}%
%BeginExpansion
{\displaystyle\sum\limits_{s=p+\left\lfloor \frac{k-1}{2}\right\rfloor
}^{\infty}}
%EndExpansion
\left\Vert N_{s}\right\Vert \\
& +D^{^{\prime\prime\prime}}%
%TCIMACRO{\dsum \limits_{p=n+1}^{\infty}}%
%BeginExpansion
{\displaystyle\sum\limits_{p=n+1}^{\infty}}
%EndExpansion
\left\Vert I-A_{p}\right\Vert
%TCIMACRO{\dsum \limits_{s=p+1+\left\lfloor \frac{k-1}{2}\right\rfloor
%}^{\infty}}%
%BeginExpansion
{\displaystyle\sum\limits_{s=p+1+\left\lfloor \frac{k-1}{2}\right\rfloor
}^{\infty}}
%EndExpansion
\left\Vert N_{s}\right\Vert +T\left\Vert K_{n+1,k-1}^{11}\right\Vert \\
& +C^{\prime\prime}%
%TCIMACRO{\dsum \limits_{p=n+1}^{\infty}}%
%BeginExpansion
{\displaystyle\sum\limits_{p=n+1}^{\infty}}
%EndExpansion
\left\Vert B_{p}+I\right\Vert
%TCIMACRO{\dsum \limits_{s=p+\left\lfloor \frac{k}{2}\right\rfloor }^{\infty}}%
%BeginExpansion
{\displaystyle\sum\limits_{s=p+\left\lfloor \frac{k}{2}\right\rfloor }%
^{\infty}}
%EndExpansion
\left\Vert N_{s}\right\Vert
& \\
& +
\left\Vert P_{p}\right\Vert
%TCIMACRO{\dsum \limits_{s=p+\left\lfloor \frac{k}{2}\right\rfloor }^{\infty}}%
%BeginExpansion
{\displaystyle\sum\limits_{s=p+\left\lfloor \frac{k}{2}\right\rfloor }%
^{\infty}}
%EndExpansion
\left\Vert N_{s}\right\Vert +B%
%TCIMACRO{\dsum \limits_{p=n+1}^{\infty}}%
%BeginExpansion
{\displaystyle\sum\limits_{p=n+1}^{\infty}}
%EndExpansion
\left\Vert Q_{p}\right\Vert
%TCIMACRO{\dsum \limits_{s=p+\left\lfloor \frac{k-2}{2}\right\rfloor }^{\infty
%}}%
%BeginExpansion
{\displaystyle\sum\limits_{s=p+\left\lfloor \frac{k-2}{2}\right\rfloor
}^{\infty}}
%EndExpansion
\left\Vert N_{s}\right\Vert \\
& +D%
%TCIMACRO{\dsum \limits_{p=n+1}^{\infty}}%
%BeginExpansion
{\displaystyle\sum\limits_{p=n+1}^{\infty}}
%EndExpansion
\left\Vert Q_{p}\right\Vert
%TCIMACRO{\dsum \limits_{s=p+\left\lfloor \frac{k-1}{2}\right\rfloor }^{\infty
%}}%
%BeginExpansion
{\displaystyle\sum\limits_{s=p+\left\lfloor \frac{k-1}{2}\right\rfloor
}^{\infty}}
%EndExpansion
\left\Vert N_{s}\right\Vert,
\end{align*}
where $D^{^{\prime}}$, $D^{^{\prime\prime}}$, $D^{^{\prime\prime\prime}}$ and $T$ are also constants. Using last inequality, we obtain
\begin{align*}
\left\Vert K_{n,k+1}^{12}\right\Vert  & \leq C%
%TCIMACRO{\dsum \limits_{p=n+1+\left\lfloor \frac{k-1}{2}\right\rfloor
%}^{\infty}}%
%BeginExpansion
{\displaystyle\sum\limits_{p=n+1+\left\lfloor \frac{k-1}{2}\right\rfloor
}^{\infty}}
%EndExpansion
\left\Vert N_{p}\right\Vert +TC%
%TCIMACRO{\dsum \limits_{p=n+1+\left\lfloor \frac{k-1}{2}\right\rfloor
%}^{\infty}}%
%BeginExpansion
{\displaystyle\sum\limits_{p=n+1+\left\lfloor \frac{k-1}{2}\right\rfloor
}^{\infty}}
%EndExpansion
\left\Vert N_{p}\right\Vert \\
& +\left(  D^{^{\prime}}+D^{^{\prime\prime\prime}}\right)
%TCIMACRO{\dsum \limits_{p=n+1}^{\infty}}%
%BeginExpansion
{\displaystyle\sum\limits_{p=n+1}^{\infty}}
%EndExpansion
\left\Vert I-A_{p}\right\Vert
%TCIMACRO{\dsum \limits_{s=p+\left\lfloor \frac{k+1}{2}\right\rfloor }^{\infty
%}}%
%BeginExpansion
{\displaystyle\sum\limits_{s=p+\left\lfloor \frac{k+1}{2}\right\rfloor
}^{\infty}}
%EndExpansion
\left\Vert N_{s}\right\Vert \\
& +\max\left\{  D^{^{\prime\prime}},D\right\}
%TCIMACRO{\dsum \limits_{p=n+1}^{\infty}}%
%BeginExpansion
{\displaystyle\sum\limits_{p=n+1}^{\infty}}
%EndExpansion
\left(  \left\Vert B_{p}+I\right\Vert +\left\Vert Q_{p}\right\Vert \right)
%TCIMACRO{\dsum \limits_{s=p+\left\lfloor \frac{k-1}{2}\right\rfloor }^{\infty
%}}%
%BeginExpansion
{\displaystyle\sum\limits_{s=p+\left\lfloor \frac{k-1}{2}\right\rfloor
}^{\infty}}
%EndExpansion
\left\Vert N_{s}\right\Vert \\
& +\max\left\{  C^{^{\prime\prime}},1\right\}
%TCIMACRO{\dsum \limits_{p=n+1}^{\infty}}%
%BeginExpansion
{\displaystyle\sum\limits_{p=n+1}^{\infty}}
%EndExpansion
\left(  \left\Vert B_{p}+I\right\Vert +\left\Vert P_{p}\right\Vert \right)
%TCIMACRO{\dsum \limits_{s=p+\left\lfloor \frac{k}{2}\right\rfloor }^{\infty}}%
%BeginExpansion
{\displaystyle\sum\limits_{s=p+\left\lfloor \frac{k}{2}\right\rfloor }%
^{\infty}}
%EndExpansion
\left\Vert N_{s}\right\Vert \\
& +B%
%TCIMACRO{\dsum \limits_{p=n+1}^{\infty}}%
%BeginExpansion
{\displaystyle\sum\limits_{p=n+1}^{\infty}}
%EndExpansion
\left\Vert N_{p}\right\Vert
%TCIMACRO{\dsum \limits_{s=p+\left\lfloor \frac{k-2}{2}\right\rfloor }^{\infty
%}}%
%BeginExpansion
{\displaystyle\sum\limits_{s=p+\left\lfloor \frac{k-2}{2}\right\rfloor
}^{\infty}}
%EndExpansion
\left\Vert N_{s}\right\Vert
\end{align*}
and
\begin{align*}
\left\Vert K_{n,k+1}^{12}\right\Vert & \leq Z%
%TCIMACRO{\dsum \limits_{p=n+\left\lfloor \frac{k+1}{2}\right\rfloor }^{\infty
%}}%
%BeginExpansion
{\displaystyle\sum\limits_{p=n+\left\lfloor \frac{k+1}{2}\right\rfloor
}^{\infty}}
%EndExpansion
\left\Vert N_{p}\right\Vert +Y%
%TCIMACRO{\dsum \limits_{p=n+1}^{\infty}}%
%BeginExpansion
{\displaystyle\sum\limits_{p=n+1}^{\infty}}
%EndExpansion
\left\Vert N_{p}\right\Vert
%TCIMACRO{\dsum \limits_{s=p+\left\lfloor \frac{k-2}{2}\right\rfloor }^{\infty
%}}%
%BeginExpansion
{\displaystyle\sum\limits_{s=p+\left\lfloor \frac{k-2}{2}\right\rfloor
}^{\infty}}
%EndExpansion
\left\Vert N_{s}\right\Vert \\
& \leq Z%
%TCIMACRO{\dsum \limits_{p=n+\left\lfloor \frac{k+1}{2}\right\rfloor }^{\infty
%}}%
%BeginExpansion
{\displaystyle\sum\limits_{p=n+\left\lfloor \frac{k+1}{2}\right\rfloor
}^{\infty}}
%EndExpansion
\left\Vert N_{p}\right\Vert +Y\left\{
%TCIMACRO{\dsum \limits_{p=n+1}^{\infty}}%
%BeginExpansion
{\displaystyle\sum\limits_{p=n+1}^{\infty}}
%EndExpansion
\left\Vert N_{p}\right\Vert
%TCIMACRO{\dsum \limits_{s=p+\left\lfloor \frac{k}{2}\right\rfloor }^{\infty}}%
%BeginExpansion
{\displaystyle\sum\limits_{s=p+\left\lfloor \frac{k}{2}\right\rfloor }%
^{\infty}}
%EndExpansion
\left\Vert N_{s}\right\Vert \right\}
& \\
& \leq Z%
%TCIMACRO{\dsum \limits_{p=n+\left\lfloor \frac{k+1}{2}\right\rfloor }^{\infty
%}}%
%BeginExpansion
{\displaystyle\sum\limits_{p=n+\left\lfloor \frac{k+1}{2}\right\rfloor
}^{\infty}}
%EndExpansion
\left\Vert N_{p}\right\Vert +Y^{^{\prime}}%
%TCIMACRO{\dsum \limits_{p=n+\left\lfloor \frac{k}{2}\right\rfloor }^{\infty}}%
%BeginExpansion
{\displaystyle\sum\limits_{p=n+\left\lfloor \frac{k}{2}\right\rfloor }%
^{\infty}}
%EndExpansion
\left\Vert N_{p}\right\Vert \\
& \leq2Z%
%TCIMACRO{\dsum \limits_{p=n+\left\lfloor \frac{k+1}{2}\right\rfloor }^{\infty
%}}%
%BeginExpansion
{\displaystyle\sum\limits_{p=n+\left\lfloor \frac{k+1}{2}\right\rfloor
}^{\infty}}
%EndExpansion
\left\Vert N_{p}\right\Vert +Y^{^{\prime}}%
%TCIMACRO{\dsum \limits_{p=n+\left\lfloor \frac{k+1}{2}\right\rfloor }^{\infty
%}}%
%BeginExpansion
{\displaystyle\sum\limits_{p=n+\left\lfloor \frac{k+1}{2}\right\rfloor
}^{\infty}}
%EndExpansion
\left\Vert N_{p}\right\Vert \leq G%
%TCIMACRO{\dsum \limits_{p=n+\left\lfloor \frac{k+1}{2}\right\rfloor }^{\infty
%}}%
%BeginExpansion
{\displaystyle\sum\limits_{p=n+\left\lfloor \frac{k+1}{2}\right\rfloor
}^{\infty}}
%EndExpansion
\left\Vert N_{p}\right\Vert ,
\end{align*}
where $C+TC=Z$, $Y=D^{^{\prime}}+D^{^{\prime\prime\prime}}+B+\max\left\{
C^{^{\prime\prime}},1\right\}  +\max\left\{  D^{^{\prime\prime}},D\right\}$,
$Y^{^{\prime}}=Y
{\displaystyle\sum\limits_{p=n+1}^{\infty}}
%EndExpansion
\left\Vert N_{p}\right\Vert$ and $2Z+Y^{^{\prime}}=G$. Similar to
$K_{n,k+1}^{12}$, we can easily obtain \eqref{E9} for $K_{n,k+1}^{11}$,
$K_{n,k+1}^{21}$ and $K_{n,k+1}^{22}.$
\end{proof}
It follows from \eqref{E7} and \eqref{E9} that
$\binom{F_{n}\left(  z\right)  }{^{G_{n}\left(  z\right)  }}$ $n\in%
%TCIMACRO{\U{2115} }%
%BeginExpansion
\mathbb{N}
%EndExpansion
\cup\left\{  0\right\}$ has analytic continuation from $D_{0}:=\left\{z\in\C:|z|=1\right\}$ to $\left\{z\in\C: |z| <1\right\}\backslash\left\{0\right\}$.
\begin{theorem}\label{T3}
Assume \eqref{E4}. Then the Jost solution satisfies
\begin{equation}\label{E10}
\binom{F_{n}\left(  z\right)  }{^{G_{n}\left(  z\right)  }}=\left[  I+o\left(
1\right)  \right]  \binom{z}{-i}z^{2n},\text{
}n\to\infty
\end{equation}
for $\text{ \ \ }z\in D:=\left\{  z\in%
%TCIMACRO{\U{2102} }%
%BeginExpansion
\mathbb{C}
%EndExpansion
:\left\vert z\right\vert \leq1\right\}  \backslash\left\{  0\right\}$.
\end{theorem}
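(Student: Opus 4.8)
The plan is to read off the asymptotics \eqref{E10} directly from the polynomial-type representation \eqref{E7}. Putting $E_n(z):=\sum_{m=1}^{\infty}K_{nm}z^{2m}$, identity \eqref{E7} becomes
$$\binom{F_n(z)}{G_n(z)}=T_n\bigl(I+E_n(z)\bigr)\binom{z}{-i}z^{2n};$$
this is valid for $|z|=1$ by Theorem \ref{T1}, and it extends to every $z\in D$ by the analytic continuation recorded immediately after Theorem \ref{T2}, since both sides are analytic on $\{0<|z|<1\}$ and agree on the unit circle. Hence it suffices to prove the two limits
$$T_n\longrightarrow I\tx{and}\sup_{z\in D}\bigl\|E_n(z)\bigr\|\longrightarrow 0\qquad(n\to\infty);$$
for then $\|T_n\|$ is bounded and $T_n\bigl(I+E_n(z)\bigr)=I+(T_n-I)+T_nE_n(z)=I+o(1)$ uniformly in $z\in D$, which is exactly \eqref{E10}.

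The second limit comes from the estimate \eqref{E9}. For $z\in D$ we have $|z|\le 1$, hence $\|E_n(z)\|\le\sum_{m\ge 1}\|K_{nm}\|$. Abbreviating $\|N_p\|:=\|I-A_p\|+\|I+B_p\|+\|Q_p\|+\|P_p\|$, I would estimate each block separately; by \eqref{E9},
\begin{align*}
\sum_{m=1}^{\infty}\bigl\|K_{nm}^{ij}\bigr\| &\le C\sum_{m=1}^{\infty}\sum_{p=n+\lfloor m/2\rfloor}^{\infty}\|N_p\|
= C\sum_{p=n}^{\infty}\|N_p\|\cdot\#\bigl\{m\ge 1:\lfloor m/2\rfloor\le p-n\bigr\}\\
&\le 3C\sum_{p=n}^{\infty}p\,\|N_p\|,
\end{align*}
where the interchange of the two nonnegative sums is legitimate and $\#\{m\ge 1:\lfloor m/2\rfloor\le p-n\}=2(p-n)+1\le 3p$. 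Summing over $i,j\in\{1,2\}$ gives $\sup_{z\in D}\|E_n(z)\|\le 12C\sum_{p=n}^{\infty}p\,\|N_p\|$, a tail of the convergent series \eqref{E4}, which therefore vanishes as $n\to\infty$.

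For the first limit I would inspect the closed-form expressions for $T_n^{ij}$ in Theorem \ref{T1}. By Theorem \ref{T1} the infinite products entering $T_n^{22}$ are absolutely convergent, and for varying $n$ they are the tails of one fixed convergent product; since the partial products are invertible (here $\det A_n,\det B_n\neq 0$), these tails converge to $I$, so $T_n^{22}\to I$, and using $B_n\to -I$ also $T_n^{11}=-B_nT_n^{22}\to I$. Furthermore $T_n^{12}\equiv 0$, while $T_n^{21}$ is $-Q_nT_n^{22}$ plus the bounded factor $A_{n-1}T_{n-1}^{11}$ times a linear combination of the tails $\sum_{p\ge n}(\cdots)$ of the absolutely convergent series formed from $\{P_p\},\{Q_p\}$; since $Q_n\to 0$ and those tails vanish, $T_n^{21}\to 0$. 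Hence $T_n\to I$, and combining this with the remainder bound proves \eqref{E10}.

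The step I expect to demand the most care is the double-sum estimate in the second paragraph: one has to see, through the index shift $\lfloor m/2\rfloor$ in \eqref{E9}, that summing over the power index $m$ exactly reproduces the weighted series $\sum_p p\|N_p\|$ that hypothesis \eqref{E4} is designed to control. Once that combinatorial bound is in hand, the invocation of analytic continuation and the passages to tails of absolutely convergent products and series are entirely routine.
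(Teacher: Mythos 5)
Your proposal is correct and follows essentially the same route as the paper: the paper's proof likewise reduces \eqref{E10} to the two limits $T_n\to I$ (its \eqref{E11}) and $\sum_{m\ge1}K_{nm}z^{2m}=o(1)$ uniformly on $D$ (its \eqref{E12}), both derived from \eqref{E4}, \eqref{E9} and the formulas for $T_n^{ij}$. You merely supply the details (the double-sum interchange producing $\sum_p p\|N_p\|$, and the tail-of-convergent-product argument) that the paper leaves implicit.
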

\begin{proof}
It follows from \eqref{E7} that
\begin{equation*}
\binom{F_{n}\left(  z\right)  }{^{G_{n}\left(  z\right)  }}=\left(
\begin{array}
[c]{cc}%
T_{n}^{11} & T_{n}^{12}\\
T_{n}^{21} & T_{n}^{22}%
\end{array}
\right)  \left[  \left(
\begin{array}
[c]{cc}%
1 & 0\\
0 & 1
\end{array}
\right)  +%
%TCIMACRO{\dsum \limits_{m=1}^{\infty}}%
%BeginExpansion
{\displaystyle\sum\limits_{m=1}^{\infty}}
%EndExpansion
\left(
\begin{array}
[c]{cc}%
K_{nm}^{11} & K_{nm}^{12}\\
K_{nm}^{21} & K_{nm}^{22}%
\end{array}
\right)  z^{2m}\right]  \binom{z^{2n+1}}{-iz^{2n}},
\end{equation*}
then using \eqref{E4}, \eqref{E9} and the definition of $T_{n}^{ij}$ for $i,j=1,2$, we get
\begin{equation}\label{E11}
\left(
\begin{array}
[c]{cc}%
T_{n}^{11} & T_{n}^{12}\\
T_{n}^{21} & T_{n}^{22}%
\end{array}
\right)  \to I,\text{ }n\to \infty,
\end{equation}
and
\begin{equation}\label{E12}
{\displaystyle\sum\limits_{m=1}^{\infty}}
%EndExpansion
\left(
\begin{array}
[c]{cc}%
K_{nm}^{11} & K_{nm}^{12}\\
K_{nm}^{21} & K_{nm}^{22}%
\end{array}
\right)  z^{2m}=o\left(  1\right),\text{ }z\in D,\text{ }n\to
\infty.
\end{equation}
From \eqref{E7}, \eqref{E11} and \eqref{E12}, we find \eqref{E10}.
\end{proof}
\section{\textbf{Continuous and discrete spectrum of }$L$}
\begin{theorem}\label{T4}
Under the condition \eqref{E4}, $\sigma_{c}\left(  L\right)  =\left[-2,2\right]$.
\end{theorem}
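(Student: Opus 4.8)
The plan is to realize $[-2,2]$ as the essential spectrum of an explicit \emph{free} operator and then transfer it to $L$ by a compact perturbation argument. Let $L_{1}$ be the operator generated in $\ell_{2}(\N,\C^{2m})$ by the degenerate system \eqref{E5}, i.e.\ by \eqref{E2}--\eqref{E3} with $A_{n}\equiv I$, $B_{n}\equiv -I$, $P_{n}\equiv Q_{n}\equiv 0$. First I would determine $\sigma(L_{1})$. Under $\lambda=-iz-(iz)^{-1}$ with $z=e^{i\theta}$ one has $\lambda=-i(z-z^{-1})=2\sin\theta$, so this map carries $|z|=1$ onto $[-2,2]$; for $\lambda\notin[-2,2]$ the corresponding $z$ satisfies $|z|\neq 1$, the two growth modes of \eqref{E5} behave like $z^{\pm 2n}$, exactly one of them is square-summable near $+\infty$ and the boundary condition \eqref{E3} excludes it, so $\lambda\in\rho(L_{1})$ and $L_{1}$ has no eigenvalues; for $\lambda=2\sin\theta\in[-2,2]$ the bounded oscillating solution $e_{n}(z)=\binom{z}{-i}z^{2n}$ of \eqref{E5} gives, after normalizing the truncations $\{e_{n}(z)\,\chi_{[N_{k},\,2N_{k}]}(n)\}_{k}$, a singular Weyl sequence for $L_{1}-\lambda I$, whence $\lambda\in\sigma_{c}(L_{1})$. (Alternatively, one may diagonalize $L_{1}$ by the discrete Fourier transform and read off that it is unitarily equivalent to multiplication by a $2m\times 2m$ matrix symbol with purely continuous spectrum $[-2,2]$.) In either case $\sigma(L_{1})=\sigma_{c}(L_{1})=[-2,2]$, so $\sigma_{\mathrm{ess}}(L_{1})=[-2,2]$.

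Next I would show that $L-L_{1}$ is compact. Writing $y=\{y_{n}\}$ with $y_{n}=\binom{y_{n}^{(1)}}{y_{n}^{(2)}}$, the two blocks of $(L-L_{1})y$ are
\[
(A_{n}-I)y_{n+1}^{(2)}+(B_{n}+I)y_{n}^{(2)}+P_{n}y_{n}^{(1)}
\tx{and}
(A_{n-1}-I)y_{n-1}^{(1)}+(B_{n}+I)y_{n}^{(1)}+Q_{n}y_{n}^{(2)}.
\]
Each summand is a bounded multiplication operator (composed, in two of the terms, with a one-step shift) whose matrix coefficient has norm tending to $0$ as $n\to\infty$, since \eqref{E4} forces $\|I-A_{n}\|$, $\|I+B_{n}\|$, $\|P_{n}\|$, $\|Q_{n}\|\to 0$; such an operator is a norm limit of its finite-rank truncations and hence compact, and a finite sum of compacts is compact. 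Therefore $L-L_{1}$ is compact and self-adjoint, and by the Weyl theorem on the invariance of the essential spectrum under compact perturbations $\sigma_{\mathrm{ess}}(L)=\sigma_{\mathrm{ess}}(L_{1})=[-2,2]$.

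It remains to pass from $\sigma_{\mathrm{ess}}(L)$ to $\sigma_{c}(L)$. Since $L=L^{\ast}$ it has empty residual spectrum; every point of $\sigma(L)$ outside $[-2,2]=\sigma_{\mathrm{ess}}(L)$ is an isolated eigenvalue of finite multiplicity, hence not in $\sigma_{c}(L)$, while no isolated eigenvalue can lie in the closed interval $[-2,2]$. Thus it suffices to rule out eigenvalues of $L$ embedded in $[-2,2]$. An eigenvalue $\lambda$ would produce a nonzero $\ell_{2}(\N,\C^{2m})$-solution of \eqref{E2}--\eqref{E3}; but for $\lambda\in[-2,2]$, i.e.\ $|z|=1$, the Jost solution satisfies, by Theorem~\ref{T3}, $\binom{F_{n}(z)}{G_{n}(z)}=[I+o(1)]\binom{z}{-i}z^{2n}$, which is bounded away from $0$ in norm, and the same oscillatory $z^{\pm 2n}$-behaviour holds for a second, linearly independent family of solutions, so no nontrivial solution of \eqref{E2} lies in $\ell_{2}(\N,\C^{2m})$ when $|z|=1$. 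Consequently $\sigma_{p}(L)\cap[-2,2]=\varnothing$, and since $\sigma_{c}(L)\subseteq\sigma_{\mathrm{ess}}(L)=[-2,2]\subseteq\sigma(L)\setminus(\sigma_{p}(L)\cup\sigma_{r}(L))=\sigma_{c}(L)$, we conclude $\sigma_{c}(L)=[-2,2]$.

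The conceptual core — and the step I expect to be the main obstacle — is the spectral analysis of the free operator $L_{1}$: one must commit to either the singular-sequence construction or the Fourier diagonalization and carry it out honestly, keeping track of the branch of $z\mapsto\lambda$ and of the half-line boundary condition \eqref{E3}. Once $\sigma_{\mathrm{ess}}(L_{1})=[-2,2]$ is established, the compactness estimate for $L-L_{1}$ is routine given \eqref{E4}, and the absence of eigenvalues embedded in $[-2,2]$ follows immediately from the asymptotics of Theorem~\ref{T3}.
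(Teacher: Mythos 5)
Your proposal is correct and follows essentially the same route as the paper: write $L$ as the free operator (your $L_{1}$, the paper's $L_{0}$, generated by \eqref{E5} with the boundary condition \eqref{E3}) plus a perturbation whose coefficients are controlled by \eqref{E4}, verify that this perturbation is compact, and invoke the Weyl theorem on invariance of the (essential/continuous) spectrum under compact perturbations. The only difference is that you supply details the paper asserts without proof — the spectral analysis of the free operator and the exclusion of eigenvalues embedded in $[-2,2]$ — which strengthens rather than alters the argument.
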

\begin{proof}
Let $L_{0}$ denote the operator generated in $\ell_{2}(\N,\C^{2m})$ by the difference expression
\[
\left(l_{1}y\right)  _{n}:=\left\{
\begin{array}
[c]{c}%
y_{n+1}^{\left(  2\right)  }-y_{n}^{\left(  2\right)  }\\
y_{n-1}^{\left(  1\right)  }-y_{n}^{\left(  1\right)  }%
\end{array}
\right.
\]
with the boundary condition $y_{0}^{\left(  1\right)  }=0$. We also define the operator $J$ in $\ell_{2}(\N,\C^{2m})$ by
\begin{align*}
J\binom{y_{n}^{\left(  1\right)  }}{y_{n}^{\left(  2\right)  }}  & :=\left(
\begin{array}
[c]{cc}%
P_{n} & 0\\
0 & Q_{n}%
\end{array}
\right)  \binom{y_{n}^{\left(  1\right)  }}{y_{n}^{\left(  2\right)  }%
}+\left(
\begin{array}
[c]{cc}%
I+B_{n} & 0\\
0 & I+B_{n}%
\end{array}
\right)  \binom{y_{n}^{\left(  2\right)  }}{y_{n}^{\left(  1\right)  }}\\
& +\left(
\begin{array}
[c]{cc}%
A_{n}-I & 0\\
0 & A_{n-1}-I
\end{array}
\right)  \binom{y_{n+1}^{\left(  2\right)  }}{y_{n-1}^{\left(  1\right)  }}\\
& =\binom{\left(  A_{n}-I\right)  y_{n+1}^{\left(  2\right)  }+\left(
I+B_{n}\right)  y_{n}^{\left(  2\right)  }+P_{n}y_{n}^{\left(  1\right)  }%
}{\left(  A_{n-1}-I\right)  y_{n}^{\left(  1\right)  }+\left(  I+B_{n}\right)
y_{n}^{\left(  1\right)  }+Q_{n}y_{n}^{\left(  2\right)  }}.
\end{align*}
It is clear that $L_{0}=L_{0}^{\ast}$ and $L=L_{0}+J$. Moreover, we can easily
prove that $$\sigma\left(L_{0}\right)  =\sigma_{c}\left(  L_{0}\right)  =\left[
-2,2\right],$$ and using \eqref{E4}, we get that the operator $J$ is compact in $\ell_{2}(\N,\C^{2m})$ \protect{\cite{Lusternik+Sobolev}}. By the Weyl theorem \protect{\cite[p.\ 13]{Glazman}} of a compact perturbation, we obtain
\begin{equation*}
\sigma_{c}(L)=\sigma_{c}(L_{0})=[-2,2].
\end{equation*}
This completes the proof.
\end{proof}
Since the operator $L$ is self-adjoint, the eigenvalues of $L$ is real. From
the definition of the eigenvalues, we can write
\begin{equation}\label{E13}
\sigma_{d}\left(  L\right)  =\left\{  \lambda\in \R:\lambda=-iz-\left(  iz\right)
^{-1},\text{ }iz\in\left(  -1,0\right)  \cup\left(  0,1\right)  ,\text{ }\det
F_{0}\left(  z\right)  =0\right\},
\end{equation}
where $\sigma_{d}\left(L\right)$ denotes the set of all eigenvalues of $L$.
\begin{definition}
The multiplicity of a zero of the function $\det F_{0}(z)$ is called the multiplicity of the corresponding eigenvalue of $L$.
\end{definition}
\begin{theorem}
Assume \eqref{E4}. Then the operator $L$ has a finite number of simple real eigenvalues.
\end{theorem}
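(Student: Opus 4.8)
The plan is to reduce the problem to showing that the scalar analytic function $\det F_0(z)$, defined on the punctured closed unit disc $D=\{z\in\C:|z|\le 1\}\setminus\{0\}$, has only finitely many zeros in the region corresponding to eigenvalues, and that each such zero is simple. First I would record the structural facts already established: by Theorem~\ref{T1} and Theorem~\ref{T2}, the vector solution $\binom{F_n(z)}{G_n(z)}$ has a convergent expansion whose coefficients $K_{nm}^{ij}$ decay in $m$ by the bound \eqref{E9}, so in particular $F_0(z)$ given by \eqref{E8} is analytic in $\{0<|z|<1\}$ and continuous on $\{0<|z|\le 1\}$, i.e.\ on $D$. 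By \eqref{E13}, the eigenvalues of $L$ correspond exactly to the zeros of $\det F_0(z)$ with $iz\in(-1,0)\cup(0,1)$; this parameter set is a compact subset of $D$ bounded away from both $z=0$ and the unit circle $|z|=1$.

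The key step is a uniqueness/analyticity argument. The function $\det F_0(z)$ is analytic on the open punctured disc $\{0<|z|<1\}$ and not identically zero (e.g.\ by the asymptotics of $F_0$ as $z\to 0$, coming from \eqref{E8}: the leading behavior is governed by $T_0^{11}z$, so $\det F_0(z)\sim z^m\det T_0^{11}\ne 0$ near $z=0$, using $\det B_n\ne 0$ and the product formula for $T_0^{11}$). Hence the zeros of $\det F_0$ in $\{0<|z|<1\}$ are isolated, and they can accumulate only at the boundary $\{|z|=1\}\cup\{0\}$. Since the eigenvalue set of \eqref{E13} requires $iz\in(-1,0)\cup(0,1)$, the relevant zeros lie in a compact set $K\subset\{0<|z|<1\}$ that has positive distance from both $0$ and the circle; an infinite set of isolated zeros cannot fit in $K$ without a limit point in $K$, which would contradict analyticity. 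Therefore $\det F_0(z)$ has at most finitely many zeros in $K$, so $L$ has finitely many eigenvalues. This is the step I expect to carry most of the weight, and the main obstacle is verifying cleanly that $\det F_0\not\equiv 0$; the $z\to 0$ asymptotic expansion from \eqref{E7}--\eqref{E8} together with the invertibility hypotheses on $A_n,B_n$ handles it, but one must be careful that the infinite product defining $T_0^{11}$ converges to an invertible matrix, which follows from \eqref{E4}.

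Finally, for simplicity of the eigenvalues I would use the self-adjointness of $L$ together with a Wronskian-type identity for the discrete Dirac system \eqref{E2}. Define the discrete Wronskian of two solutions $u,v$ of \eqref{E2} (for the same $\lambda$) in the standard way, $W_n(u,v)=A_{n-1}\big(u_{n-1}^{(1)\ast}v_n^{(2)}-\dots\big)$ adapted to \eqref{E2}, and check it is $n$-independent. For $\lambda\in\sigma_d(L)$ with $iz\in(-1,0)\cup(0,1)$, the Jost solution $\binom{F_n}{G_n}$ decays geometrically as $n\to\infty$ by Theorem~\ref{T3} (since $|z|<1$ forces $z^{2n}\to 0$), hence lies in $\ell_2$, and the boundary condition \eqref{E3} is met because $\det F_0(z)=0$; so this is, up to a constant, the eigenfunction. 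A self-adjoint operator with real spectrum has a one-dimensional eigenspace at each such eigenvalue only if the decaying solution space is one-dimensional — here one shows any second linearly independent $\ell_2$ solution is impossible by the standard argument that $\det F_0(z)=0$ pins down the boundary data up to scalar, combined with the asymptotic normalization \eqref{E6}. To upgrade from geometric to algebraic simplicity (simplicity of the zero of $\det F_0$), differentiate the eigenvalue equation in $z$ and use the constancy of the Wronskian: $\frac{d}{dz}\det F_0(z)\big|_{z=z_0}$ is expressed, via a summation-by-parts identity, as a nonzero multiple of $\sum_{n=1}^\infty\big(\|F_n(z_0)\|^2+\|G_n(z_0)\|^2\big)>0$, which is finite and strictly positive by the $\ell_2$ decay. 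The main technical obstacle here is setting up the correct Wronskian identity and the $z$-derivative formula for the matrix-valued system so that the positivity of the $\ell_2$-norm of the eigenfunction appears; once that identity is in hand, $\frac{d}{dz}\det F_0(z_0)\ne 0$ follows, giving simplicity.
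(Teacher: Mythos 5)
Your treatment of simplicity is essentially the paper's own argument: differentiate the system in $z$, form the Lagrange/Wronskian identity, telescope over $n$, and identify the pairing of $\ddz F_0(z_0)u$ (for a null vector $u$ of $F_0(z_0)$) with a nonzero multiple of $\sum_n\left(\Vert F_n(z_0)u\Vert^2+\Vert G_n(z_0)u\Vert^2\right)$. That part of the sketch follows the paper's route and is fine as a plan.

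The finiteness argument, however, contains a genuine gap. You assert that the parameter set $\{z:\ iz\in(-1,0)\cup(0,1)\}$ ``is a compact subset of $D$ bounded away from both $z=0$ and the unit circle.'' It is not: writing $iz=t$ with $t\in(-1,0)\cup(0,1)$ gives $z=-it$, i.e.\ the \emph{open} punctured segment of the imaginary axis joining $-i$ to $i$. Its closure contains $0$ and the boundary points $\pm i$, so nothing in your argument excludes an infinite sequence of isolated zeros of $\det F_0$ accumulating at $z=\pm i$ --- equivalently, eigenvalues of $L$ accumulating at the endpoints $\lambda=\mp 2$ of the continuous spectrum. This is precisely the delicate case, and it is the reason hypothesis \eqref{E4} carries the weight $n$: it produces the estimates \eqref{E9}, which make $A(z)=\sum_{m}K_{0m}^{11}z^{2m}-i\sum_{m}K_{0m}^{12}z^{2m-1}$ analytic on all of $D$, in particular controlled up to $|z|=1$. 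The paper then writes $F_0(z)=T_0^{11}z\left(I+A(z)\right)$ and invokes \cite[Theorem 5.1]{Gohberg+Krein} to conclude that $I+A(z)$ is invertible on the boundary and that the zero set of $\det F_0$ has no limit points in $D$, hence is finite. Mere analyticity in the open punctured disc --- which is all your argument uses --- is not enough: one can have infinitely many isolated zeros marching toward the circle. (Your handling of $z\to 0$ via the leading term $T_0^{11}z$ is sound, and accumulation there could also be excluded by boundedness of $L$; the boundary circle is the real obstruction your proof does not address.)
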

\begin{proof}
To prove the theorem, we have to show that the function $\det F_{0}\left(
z\right)$ has a finite number of simple zeros. Let $z_{0}$ be one of the
zeros of $\det F_{0}\left(z\right)$. Hence $\det F_{0}\left(z_{0}\right)=0$, there is a non-zero vector $u$ such that $F_{0}\left(z_{0}\right)u=0$ \protect{\cite{Agranovich+Marchenko}}. As we know, $\binom{F_{n}\left(z\right)  }{^{G_{n}\left(z\right)}}$ is the Jost solution of \eqref{E2} for $\lambda=-iz-\left(iz\right)^{-1}$, i.e.,
\begin{equation}\label{E14}
\left\{
\begin{array}
[c]{c}%
A_{n}G_{n+1}\left(  z\right)  +B_{n}G_{n}\left(  z\right)  +P_{n}F_{n}\left(
z\right)  =\left[  -iz-\left(  iz\right)  ^{-1}\right]  F_{n}\left(  z\right)
\\
A_{n-1}F_{n-1}\left(  z\right)  +B_{n}F_{n}\left(  z\right)  +Q_{n}%
G_{n}\left(  z\right)  =\left[  -iz-\left(  iz\right)  ^{-1}\right]
G_{n}\left(  z\right).
\end{array}
\right.
\end{equation}
Differentiating \eqref{E14}  with respect to $z$, we have
\begin{equation}\label{E15}
\begin{array}
[c]{c}%
A_{n}\ddz G_{n+1}\left(z\right)  +B_{n}\ddz G_{n}\left(
z\right)+P_{n}\ddz F_{n}\left(z\right) \\
=\left[-iz-\left(iz\right)^{-1}\right]\ddz F_{n}\left(
z\right)-i\left(1-z^{-2}\right)F_{n}\left(z\right) \\
\\
A_{n-1}\ddz F_{n-1}\left(  z\right)  +B_{n}\ddz F_{n}\left(
z\right)+Q_{n}\ddz G_{n}\left(z\right) \\
=\left[-iz-\left(iz\right)^{-1}\right]\ddz G_{n}\left(
z\right)-i\left(1-z^{-2}\right)G_{n}\left(z\right).
\end{array}
\end{equation}
Using \eqref{E14} and \eqref{E15}, we obtain
\begin{align}\label{E16}
& \left(\ddz F_{n}\left(z\right)\right)^{\ast}A_{n}G_{n+1}\left(z\right)+\left(\ddz F_{n}\left(z\right)\right)^{\ast}B_{n}G_{n}\left(z\right)\\
& -\left(\ddz G_{n+1}\left(z\right)  \right)^{\ast}A_{n}
F_{n}\left(z\right)-\left(\ddz G_{n}\left(z\right)\right)^{\ast}B_{n}F_{n}\left(z\right)\nonumber\\
&=\left[-iz-\left(iz\right)^{-1}\right]\left(\ddz F_{n}\left( z\right) \right)^{\ast}F_{n}\left(z\right)\nonumber\\
&-\overline{\left[-iz-\left(iz\right)^{-1}\right]}\left(\ddz F_{n}\left(z\right)\right)^{\ast}F_{n}\left(z\right)+\overline{i\left(1-z^{-2}\right)  }F_{n}^{\ast}\left(z\right)
F_{n}\left(z\right)\nonumber
\end{align}
and
\begin{align}\label{E17}
& \left(\ddz G_{n}\left(z\right)\right)^{\ast}A_{n-1}
F_{n-1}\left(z\right)+\left(\ddz G_{n}\left(z\right)\right)^{\ast}B_{n}F_{n}\left(z\right)\\
& -\left(\ddz F_{n-1}\left(z\right)\right)^{\ast}A_{n-1}G_{n}\left(z\right)-\left(\ddz F_{n}\left(z\right)\right)^{\ast}B_{n}G_{n}\left(z\right)\nonumber\\
&=\left[-iz-\left(iz\right)^{-1}\right]\left(\ddz G_{n}\left(z\right)\right)^{\ast}G_{n}\left(z\right)\nonumber\\
&-\overline{\left[-iz-\left(iz\right)^{-1}\right]}\left(\ddz G_{n}\left(z\right)\right)^{\ast}G_{n}\left(z\right)
+\overline{i\left(1-z^{-2}\right)}G_{n}^{\ast}\left(z\right)
G_{n}\left(z\right).\nonumber
\end{align}
From \eqref{E16} and \eqref{E17}, we get
\begin{align}\label{E18}
&\left(\ddz G_{1}\left(z\right)\right)^{\ast}A_{0}F_{0}\left(z\right)-\left(\ddz F_{0}\left(z\right)\right)^{\ast}A_{0}G_{1}\left(z\right)\\
&=\left[-iz-\left(iz\right)^{-1}\right]
{\displaystyle\sum\limits_{n=1}^{\infty}}
\left[\left(\ddz F_{n}\left(z\right)\right)^{\ast}
F_{n}\left(z\right)+\left(\ddz G_{n}\left(z\right)\right)^{\ast}G_{n}\left(z\right)\right]\nonumber\\
&-\overline{\left(iz-\left(iz\right)^{-1}\right)}
{\displaystyle\sum\limits_{n=1}^{\infty}}
\left[\left(\ddz F_{n}\left(z\right)\right)^{\ast}
F_{n}\left(z\right)+\left(\ddz G_{n}\left(z\right)\right)^{\ast}G_{n}\left(z\right)\right]\nonumber\\
&+i\overline{\left(1-z^{-2}\right)}
{\displaystyle\sum\limits_{n=1}^{\infty}}
\left[F_{n}^{\ast}\left(z\right)F_{n}\left(z\right)+G_{n}^{\ast
}\left(z\right)G_{n}\left(z\right)\right].\nonumber
\end{align}
If we write \eqref{E18} for $z=z_{0}$, we obtain
\begin{align}\label{E19}
&\left(\ddz G_{1}\left(z_{0}\right)\right)^{\ast}A_{0}
F_{0}\left(z_{0}\right)-\left(\ddz F_{0}\left(z_{0}\right)\right)^{\ast}A_{0}G_{1}\left(z_{0}\right)\\
&=-i\left(1-\overline{z_{0}^{-2}}\right)
{\displaystyle\sum\limits_{n=1}^{\infty}}
\left[F_{n}^{\ast}\left(z_{0}\right)F_{n}\left(z_{0}\right)
+G_{n}^{\ast}\left(z_{0}\right)G_{n}\left(z_{0}\right)\right]\nonumber
\end{align}
using $iz_{0}\in(-1,0)\cup(0,1)$.
Then if we multiply \eqref{E19}  with the vector $u$ on the right side $(u\in\ell_{2}(\N,\C^{2m}), u\neq0)$,  we get
$$\left\langle A_{0}G_{1}\left(z_{0}\right)u,\ddz F_{0}\left(z_{0}\right)u\right\rangle =\left(i-\frac{i}{\left(iz_{0}\right)^{2}}\right)\left\{
{\displaystyle\sum\limits_{n=1}^{\infty}}
\left\Vert F_{n}\left(z_{0}\right)u\right\Vert^{2}+
{\displaystyle\sum\limits_{n=1}^{\infty}}
\left\Vert G_{n}\left(z_{0}\right)u\right\Vert^{2}\right\}.$$
Since $iz_{0}\neq0$ and $iz_{0}\neq1$, we can write $i-\dfrac{i}{\left(iz_{0}\right)^{2}}\neq0$. Also we can write $\left\Vert F_{n}\left(  z_{0}\right)u\right\Vert \neq0$
and $\left\| G_{n}\left(  z_{0}\right)u\right\| \neq0$ for all $n\in\N$, so
$$\left\langle A_{0}G_{1}\left(z_{0}\right)  u,\frac{d}{dz}F_{0}\left(
z_{0}\right)  u\right\rangle \neq0.$$
This shows that $\ddz F_{0}\left(z_{0}\right)u\neq0,$ that is, all
zeros of $\det F_{0}\left(z\right)$ are simple. To complete the proof of theorem, we have to show that the function $\det
F_{0}\left(z\right)$ has a finite number of zeros. Let we take the
function
\[
M\left(  z\right)  =z^{-1}\left(  T_{0}^{11}\right)  ^{-1}F_{0}\left(
z\right)  =I+A\left(  z\right),
\]
where $A\left(  z\right)  =%
%TCIMACRO{\dsum \limits_{m=1}^{\infty}}%
%BeginExpansion
{\displaystyle\sum\limits_{m=1}^{\infty}}
%EndExpansion
K_{0m}^{11}z^{2m}-i%
%TCIMACRO{\dsum \limits_{m=1}^{\infty}}%
%BeginExpansion
{\displaystyle\sum\limits_{m=1}^{\infty}}
%EndExpansion
K_{0m}^{12}z^{2m-1}$. Since $A\left(  z\right)$ is matrix-valued analytic function
on $D$, the function $M$ has inverse on the boundary of $D$ \protect{\cite[Theorem 5.1]{Gohberg+Krein}}, i.e., the limit points of
the set of zeros of
\begin{equation}\label{E20}
\det F_{0}\left(z\right)=0
\end{equation}
is empty. Therefore, the set of zeros of \eqref{E20} in $D$ is
finite, i.e., the operator $L$ has a finite number of eigenvalues.
\end{proof}
bibliographystyle{plain}

\bibliographystyle{plain}
\bibliography{tttbib}
\end{document}